\algnewcommand\algorithmicforeach{\textbf{for each}}
\newcommand{\Var}{\mathnormal{V\mkern-.8\thinmuskip ar}}
\newcommand\A{{\mathbf A}}
\newcommand\B{{\mathbf B}}
\newcommand\IBSL{\ensuremath{\mathcal{IBSL}}\xspace}
\newcommand\LIBSL{\ensuremath{\mathcal{L\text{-}IBSL}}\xspace}
\newcommand\CL{\ensuremath{\mathrm{CL}}\xspace}
\newcommand\PWK{\ensuremath{\mathrm{PWK}}\xspace}
\DeclareMathAlphabet{\mathbfsf}{\encodingdefault}{\sfdefault}{bx}n
\providecommand*{\Dashv}{\mathrel{\mathpalette\@Dashv\vDash}}
\newcommand*{\@Dashv}[2]{\reflectbox{$\m@th#1#2$}}
\renewcommand\geq{\geqslant}
\newcommand\leqs{\leqslant}
\newcommand\pair[1]{{\langle#1\rangle}}
\newcommand\PL{{\mathcal{P}_{l}}}
\newcommand\PLA{{\mathcal{P}_{l} (\mathbb{A})}}
\newcommand{\?}{\ensuremath{\mkern0.4\thinmuskip}}   % very small math space
\begin{document}

\title{Counting finite linearly ordered involutive bisemilattices}

\author{Stefano Bonzio\inst{1} \and
Michele Pra Baldi\inst{2} \and
Diego Valota\inst{3}}

\authorrunning{S. Bonzio et al.}

% First names are abbreviated in the running head.
% If there are more than two authors, 'et al.' is used.
%
\institute{Dipartimento di Scienze Biomediche e Sanit\`a Pubblica,
	Universit\`{a} Politecnica delle Marche, \\
	 Via Tronto 10/a, 60200 Torrette di Ancona, Italy \\
	\email{stefano.bonzio@gmail.com}
\and
	Dipartimento  F.I.S.P.P.A., Universit\`{a} di Padova, Padova, Italy \\
	\email{m.prabaldi@gmail.com}
\and
	Dipartimento di Informatica, Universit\`{a} degli Studi di Milano,\\
	Via Comelico 39, I-20135 Milano, Italy	\\
	\email{valota@di.unimi.it}}

\maketitle

\begin{abstract}
%The abstract should briefly summarize the contents of the paper in
%15--250 words.

The class of involutive bisemilattices plays the role of the algebraic counterpart
of paraconsistent weak Kleene logic. 
Involutive bisemilattices can be represented as \emph{P\l onka sums} of Boolean algebras,
that is semilattice direct systems of Boolean algebras.
In this paper we exploit the P\l onka sum representation with the aim of counting, up to isomorphism, finite involutive bisemilattices 
whose direct system is given by totally ordered semilattices.

\keywords{Finite Involutive Bisemilattices \and Weak Kleene Logic \and P\l onka sums.}
\end{abstract}

\section{Introduction}

The class of involutive bisemilattices plays the role of the algebraic counterpart among one of the three-valued logics introduced by Kleene in \cite{Kleene}, namely paraconsistent weak Kleene logic -- \PWK for short.  \PWK, essentially introduced by Halld\'en \cite{Hallden}, can be defined as the logic induced by a matrix given by the weak Kleene tables with $\{1,n\}$ as truth set: 

\begin{center}\renewcommand{\arraystretch}{1.2}
\begin{tabular}{>{$}c<{$}|>{$}c<{$}>{$}c<{$}>{$}c<{$}}
   \land & 0 & n & 1 \\[.2ex]
 \hline
       0 & 0 & n & 0 \\
       n & n & n & n \\          
       1 & 0 & n & 1
\end{tabular}
\qquad
\begin{tabular}{>{$}c<{$}|>{$}c<{$}>{$}c<{$}>{$}c<{$}}
   \lor & 0 & n & 1 \\[.2ex]
 \hline
     0 & 0 & n & 1 \\
     n & n & n & n \\          
     1 & 1 & n & 1
\end{tabular}
\qquad
\begin{tabular}{>{$}c<{$}|>{$}c<{$}}
  \lnot &  \\[.2ex]
\hline
  1 & 0 \\
  n & n \\
  0 & 1 \\
\end{tabular}

\end{center}

Equivalently (see \cite{CiuniCarrara,Bonzio16}), \PWK can be obtained out of (propositional) classical logic (\CL) imposing the following syntactical restriction: 
\[
\Gamma \vdash_{\PWK} \varphi \Longleftrightarrow \text{ there is }\Delta \subseteq \Gamma \text{ s.t. }\Var(\Delta)\subseteq\Var(\varphi) \text{ and } \Delta\vdash_{\CL}\varphi,
\]
where $\Var(\varphi)$ is the set of variables really occurring in $\varphi$.

Involutive bisemilattices consist of a \emph{regular} variety, namely one satisfying identities of the form $\varepsilon\approx\tau$, where $\Var(\varepsilon) = \Var(\tau)$. More precisely, involutive bisemilattices satisfy only the regular identities holding in Boolean algebras. Due to the general theory of regular varieties, which traces back to the pioneering work of P\l onka \cite{Plo67}, involutive bisemilattices can be represented as \emph{P\l onka sums} of Boolean algebras, that is, a sum over semilattice direct systems of Boolean algebras.
Over the years, P\l onka sums and (some) regular varieties have been studied in depth both from a purely algebraic perspective \cite{Bal70,Kal71,Harding2016,Harding20172} and in connection with their topological duals \cite{Romanowska96,Romanowska97,SB18}. The machinery of P\l onka sums has also found useful applications in the study of the constraint satisfaction problem \cite{Bergman2015} and in database semantics \cite{Libkin,Puhlmann}. Recently, thanks to the extension of this formalism to logical matrices \cite{BonzioMorascoPrabaldi,BonzioPraBaldi}, P\l onka sums have turned out to play a useful role in the investigation of logics featuring the presence of a non-sensical, infectious truth-value. This family of logics -- including \PWK and Bochvar logic \cite{Bochvar} -- provides valuable formal instruments to model computer-programs affected by errors \cite{Ferguson}. % as well as recent developments in the theory of truth \cite{Szmuctruth}.
In this paper we exploit the P\l onka sum representation for the purpose of counting the finite members of a specific subclass of involutive bisemilattices, whose representation consists of a linearly ordered semilattice. In particular, we provide an algorithm offering a solution to the fine spectrum problem 
\cite{T75} for the class of linearly ordered involutive bisemilattices. In order to achieve this goal, we use the categorical apparatus developed in \cite{Loi}. %, up to isomorphism.
%We believe that the application of the above mentioned algebraic methods allows the development of fine grained algorithms which turns out to be more efficient with respect to ``brute-force'' procedures. 
%This is confirmed by the computational experiments. % (see Sections~\ref{sec:algos}). 
%In particular, a comparison between the efficiency of the algorithm introduced in this paper and Mace4 is briefly discussed in Section \ref{sec: Conclusioni}. 
We believe that the application of the above-mentioned algebraic
methods allows us to develop algorithms that are more efficient than
``brute-force'' procedures. This is confirmed by the computational
experiments. In particular, a comparison between the efficiency
of the algorithm introduced in this paper and of Mace4 is briefly
discussed in Section \ref{sec: Conclusioni}.

\section{Preliminaries}
%\subsection{Semilattice systems}
A \textit{semilattice} is an algebra $\A = \langle A, \lor\rangle$, where $\lor$ is a binary commutative, associative and idempotent operation. Given a semilattice $\A$ and $a, b \in A$, we set $a \leq b \Longleftrightarrow a \lor b = b$.
It is easy to see that $\leq$ is a partial order on $A$.

We briefly recall the category of semilattice direct systems, introduced in \cite{Loi,SB18}. Intuitively, they consists of a specialization of direct (and inverse) systems of an arbitrary category, obtained by assuming the index set to be a semilattice instead of a (directed) pre-ordered set. For any unexplained notion in category theory, the reader is referred to \cite{ML98}. 

\begin{definition}\label{def: direct system}
 Let $ \mathfrak{C} $ be an arbitrary category. A \emph{semilattice direct system} in $ \mathfrak{C} $ is a triple $\mathbb{X}= \pair{X_{i}, p_{ii'}, I} $ such that 
 \begin{enumerate}
\item $ I $ is a semilattice.
\item $ \{X_i\}_{i\in I} $ forms an indexed family of objects in $ \mathfrak{C}$ with disjoint universes;
\item $p_{ii'} : X_{i} \to X_{i'}$ is a morphism of $ \mathfrak{C} $, for each pair $i\leqs i'$ ($i,i^{\prime}\in I$), satisfying that $p_{ii} $ is the identity in $ X_i $ and such that $ i\leq i'\leq i'' $ implies $ p_{i'i''} \circ p_{ii'}  = p_{ii''}$. 
\end{enumerate}
\end{definition}

We refer to $ I $, $X_i$ and $ p_{ii'} $ as the index set, the terms and the transition morphisms, respectively, of the (semilattice direct) system.

A morphism between two semilattice direct systems $ \mathbb{X}=\pair{X_{i}, p_{ii'}, I} $ and $ \mathbb{Y}=\pair{Y_{j}, p_{jj'}, J} $ is a pair $(\varphi, \{f_i\}_{i\in I})\colon\mathbb{X}\to\mathbb{Y}$ such that
\begin{itemize}
\item[i)] $ \varphi\colon I\rightarrow J $ is a semilattice homomorphism 
\item[ii)] $ f_i\colon X_{i}\rightarrow Y_{\varphi(i)} $ is a morphism in $ \mathfrak{C} $,
 making the following diagram
%diagram in Fig. \ref{fig: diagramma semilattice dir systems} 
commutative, for each $ i, i'\in I $, $ i\leq i' $.
\end{itemize}
%\begin{figure}[h]
\begin{center}\leavevmode
\xymatrix{%@R=1pc@C=1pc{
X_i\ar@{->}[r]^-{p_{ii'}}\ar@{->}[d]_-{f_{i}}&X_{i'}\ar@{->}[d]^-{f_{i'}}\\
Y_{\varphi(i)} \ar@{->}[r]^-{q_{\varphi{i}\varphi{i'}}} 	& Y_{\varphi(i')}
	}
\end{center}
It is easy to check that the semilattice direct systems of a category $\mathfrak{C}$ form a category, which we denote by Sem-dir-$\mathfrak{C}$. Semilattice \emph{inverse} systems of an arbitrary category are obtained analogously, by, intuitively, reversing the directions of transition morphisms (see \cite{Loi} for precise details). Moreover, provided that two categories $\mathfrak{C}$ and $\mathfrak{D}$ are dually equivalent, the duality can be lifted to Sem-dir-$\mathfrak{C}$ and Sem-inv-$\mathfrak{D}$ (see \cite[Remark 3.6]{Loi}).
%\subsection{Involutive bisemilattices and P\l onka sums}
The class of \emph{involutive bisemilattices} has been introduced in \cite{Bonzio16} as the most suitable candidate to be the algebraic counterpart of the logic \PWK. %paraconsistent weak Kleene logic, PWK for short. 

\begin{definition}\label{def: IBSL}
\normalfont
An \emph{involutive bisemilattice} is an algebra $\B = \pair{B, \vee,\wedge,\neg,0,1}$ of type $(2,2,1,0,0)$ satisfying:
\begin{multicols}{2}
\begin{enumerate}[label=\textbf{I\arabic*}.]
\item $x \vee x\approx x$;
\item $x \vee y\approx y\vee x$;
\item $x\vee(y\vee z)\approx(x\vee y)\vee z$;
\item $\neg(\neg x)\approx x$;
\item $x\wedge y\approx\neg( \neg x\vee \neg y)$;
\item $x\wedge( \neg x\vee y)\approx x\wedge y$; \label{rmp}
\item $0\vee x\approx x$;
\item $1\approx \neg 0$.
\end{enumerate}
\end{multicols}
\end{definition}

Involutive bisemilattices form an equational class denoted by $\mathcal{IBSL}$. Examples of involutive bisemilattices include any Boolean algebra, as well as any semilattice with zero. In the latter case, the two binary operations coincide and the unary operation is the identity.
The variety $\mathcal{IBSL}$ is the \emph{regularization}\footnote{For the theory of regular varieties and regularizations we refer the reader to \cite{Romanowska92}.} of the variety $\mathcal{BA}$, of Boolean algebras (see \cite{plonka1984sum,Bonzio16}), i.e. $\mathcal{IBSL}\models\varepsilon\approx\tau$ if and only if $\mathcal{BA}\models\varepsilon\approx\tau $ and $\Var(\varphi)=\Var(\tau)$. 
Involutive bisemilattices can be connected to semilattice direct systems, in a way that we sketch. It is always possible to construct an algebra out of a semilattice direct system in an algebraic category. % it is possible to construct a new algebra over such a system. 
The construction we have in mind is called \emph{P\l onka sum} and is due to J. P\l onka \cite{Plo67}. For standard information on P\l onka sums we refer the reader to \cite{Romanowska92}.

\begin{definition}\label{def: somma di Plonka}

Let $ \mathbb{A}=\pair{\A_i , p_{ii'}, I} $ be a semilattice direct system of algebras  of a fixed type $\nu$. The \emph{P\l onka sum} over $\mathbb{A}$ is the algebra $\PLA = \pair{\bigsqcup_I A_i, g^{\mathcal{P}}} $, whose universe is the disjoint union of the algebras $\A_i$ and the operations $ g^{\mathcal{P}} $ are defined as follows: for every $n$"-ary $g\in \nu$, and $a_1,\dots, a_n\in \bigsqcup_I A_i $, where $n\geq 1$ and $a_r\in A_{i_r}$, we set $j = i_1 \lor\dots\lor i_n$ and define\footnote{In case $\nu$ contains constants, then, it is necessary to assume that $I$ has a least element, see \cite{plonka1984sum} for details.}
\[
g^{\mathcal{P}} (a_1,\dots,a_n) = g^{\A_j} (p_{i_1j}(a_1),\dots,p_{i_nj}(a_n)).
\]

\end{definition}

P\l onka sums provide a useful tool to represent algebras belonging to regular varieties. We recall here the representation theorem for involutive bisemilattices.

Involutive bisemilattices, as well as bisemilattices admit a representation as P\l onka sums over a semilattice system of Boolean algebras.
%\begin{theorem}[\mbox{\cite[Thm.~46]{Bonzio16}}]\label{the: rappresentazione Plonka}\
%If $\mathbb{A} $ % = \pair{(\varphi_{ij} : i\leqs j), \I}$ 
%is a semilattice direct system of Boolean algebras, then $ \PLA $ is an involutive bisemilattice.
%If $\B$ is an involutive bisemilattice, then $\B\cong\PLA$, where $\mathbb{A}$ is a semilattice direct system of Boolean algebras. % $\mathbb{T}= \pair{\mathbf{A}_{c}, \varphi_{cd}, P(\B)}$, where the homomorphism $\varphi_{cd} : [\lnot c,c] \to [\lnot d, d]$ is given by $\varphi_{cd}(a) = \lnot d\lor a$.
%\end{theorem}
%
From \cite[Thm.~46]{Bonzio16} we know that, 
if $\mathbb{A} $ % = \pair{(\varphi_{ij} : i\leqs j), \I}$ 
is a semilattice direct system of Boolean algebras, then $ \PLA $ is an involutive bisemilattice,
and if $\B$ is an involutive bisemilattice, then $\B\cong\PLA$, where $\mathbb{A}$ is a semilattice direct system of Boolean algebras.
The above facts can be strengthened to a full categorical equivalence.
%Theorem \ref{the: rappresentazione Plonka} can be strengthened as follows %states that the objects of the category $\mathfrak{IBSL}$ are isomorphic to the objects of the category Sem-dir-$\mathfrak{BA}$. Indeed, they are equivalent as categories. 

\begin{theorem}[\mbox{\cite[Thm.~4.5]{Loi}}]\label{th: IBSL e Sem-dir-BA sono equivalenti}
The categories $\mathfrak{IBSL}$ and \emph{Sem-dir-}$\mathfrak{BA}$ are equivalent.
\end{theorem}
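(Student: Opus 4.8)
The plan is to exhibit the P\l onka-sum construction as a functor $F\colon\emph{Sem-dir-}\cBA\to\cIBSL$ and to prove it is an equivalence by checking that it is essentially surjective, full and faithful; an explicit quasi-inverse can then be read off. Essential surjectivity is already granted by \cite[Thm.~46]{Bonzio16}: every involutive bisemilattice $\B$ is isomorphic to $\PLA$ for a suitable semilattice direct system $\mathbb{A}$ of Boolean algebras. Hence the whole burden of the proof is to upgrade this object-level correspondence to morphisms, i.e.\ to establish fullness and faithfulness.

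First I would make $F$ into a functor. On objects set $F(\mathbb{X})=\PLX$, which is an involutive bisemilattice by \cite[Thm.~46]{Bonzio16}. Given a morphism $(\varphi,\{f_i\}_{i\in I})\colon\mathbb{X}\to\mathbb{Y}$, define $F(\varphi,\{f_i\})\colon\PLX\to\PLY$ componentwise, by $a\mapsto f_i(a)$ whenever $a\in X_i$. That this is a homomorphism of involutive bisemilattices is a direct computation: for a binary join of $a\in X_i$ and $b\in X_{i'}$ one evaluates both sides in the fiber indexed by $i\vee i'$ (resp.\ $\varphi(i)\vee\varphi(i')=\varphi(i\vee i')$, using that $\varphi$ is a semilattice homomorphism), and the commuting square relating the $f$'s to the transition maps makes the two results agree; the unary and nullary operations are handled inside single fibers. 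Functoriality and faithfulness are then immediate, since the data $f_i$ together with the indices $\varphi(i)$ are recovered from $F(\varphi,\{f_i\})$ simply by restriction.

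The crux is fullness, and for it I would recover the direct-system structure of a P\l onka sum by intrinsic, term-definable means. The key device is the partition term $a\cdot b\eq a\vee(b\wedge\neg b)$: in $\PLA$, if $a\in A_i$ and $b\in A_j$ then $b\wedge\neg b$ is the bottom $0$ of the Boolean fiber $A_j$, and $a\cdot b=p_{i,\,i\vee j}(a)$ transports $a$ to the fiber indexed by $i\vee j$ without altering its Boolean value. Consequently the fibers are exactly the classes of the term-definable equivalence $a\sim b\iff(a\cdot b=a \text{ and } b\cdot a=b)$, the index semilattice is $B/{\sim}$ with join induced by $\vee$, and each transition map is $x\mapsto x\cdot b$ for any $b$ in the target fiber. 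Now, given any homomorphism $h\colon\PLX\to\PLY$, it preserves $\cdot$ and hence $\sim$; therefore each fiber $X_i$ is carried into a single fiber $Y_{\varphi(i)}$, the induced map $\varphi$ on index semilattices is a homomorphism (because $h$ preserves $\vee$), and $f_i:=h|_{X_i}\colon X_i\to Y_{\varphi(i)}$ is a Boolean homomorphism; preservation of $\cdot$ also forces the required commuting squares with the transition maps. Thus $h=F(\varphi,\{f_i\})$, which is fullness.

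Finally I would assemble the two natural isomorphisms witnessing the equivalence, or equivalently package the recovery above as a quasi-inverse $G$. For an involutive bisemilattice $\B$, the isomorphism $\B\cong\PLA$ of \cite[Thm.~46]{Bonzio16} is the component at $\B$ of a natural transformation $\mathrm{Id}_{\cIBSL}\Rightarrow F\circ G$; for a system $\mathbb{A}$, identifying the intrinsically recovered fibers, index semilattice and transition maps of $\PLA$ with the original data of $\mathbb{A}$ gives the component of $G\circ F\Rightarrow\mathrm{Id}$. I expect the genuinely delicate point to be this fullness step together with the verification that fiber recovery is natural: one must check that the partition operation is preserved on the nose by arbitrary homomorphisms and that no homomorphism can merge or split fibers in a way incompatible with a direct-system morphism — everything else reduces to the componentwise bookkeeping already outlined.
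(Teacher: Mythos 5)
Your route is the same one behind the paper's (cited) proof: the paper does not prove Theorem~\ref{th: IBSL e Sem-dir-BA sono equivalenti} itself but imports it from \cite[Thm.~4.5]{Loi}, recording only that the equivalence is implemented by the P\l onka sum functor in one direction and the P\l onka decomposition in the other --- which is precisely the pair of functors you construct. The core of your argument is sound: the term $x\cdot y = x\vee(y\wedge\neg y)$ does recover the fibers (as classes of your relation $\sim$), the index semilattice (as the quotient under $\sim$), and the transition maps (as $x\mapsto x\cdot b$); since any homomorphism preserves this term, your fullness and faithfulness arguments go through, and essential surjectivity is indeed supplied by \cite[Thm.~46]{Bonzio16}.

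The one step that fails as written is the claim that the nullary operations are ``handled inside single fibers''. They are not: by the footnote to Definition~\ref{def: somma di Plonka}, the constants of $\PLX$ are the constants $0_{i_0},1_{i_0}$ of the fiber indexed by the \emph{least} element $i_0$ of $I$. Your componentwise map $F(\varphi,\{f_i\})$ therefore sends $0^{\PLX}=0_{i_0}$ to $0_{\varphi(i_0)}$, and this equals $0^{\PLY}=0_{j_0}$ only if $\varphi(i_0)=j_0$; that is not implied by $\varphi$ being a mere semilattice homomorphism. Concretely, let $\mathbb{X}=\mathbb{Y}$ be the system of two copies of $\two$ over the chain $\{0<1\}$ with the identity as transition map, let $\varphi$ be the constant map onto $1$, and let $f_0,f_1$ be the evident isomorphisms onto $Y_1$: the squares commute, yet the induced map sends $0^{\PLX}$ into the wrong fiber, so it is not a homomorphism of involutive bisemilattices. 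Hence, with morphisms of semilattice direct systems taken literally as defined in this paper, your $F$ is not well defined; the category \emph{Sem-dir-}$\cBA$ must be taken with index semilattices having a least element and with the index maps $\varphi$ preserving it, which is how the equivalence is set up in \cite{Loi}. Once that is built in, your proof is complete, and two small companion facts are worth recording: in the fullness direction the recovered $\varphi$ preserves least elements automatically (since $h(0)=0$ forces $h$ to map the bottom fiber to the bottom fiber), and each restriction $f_i=h|_{X_i}$ really is a Boolean homomorphism because the fiber constants are term-definable, namely $0_i=a\wedge\neg a$ and $1_i=a\vee\neg a$ for any $a\in X_i$.
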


The equivalence is proved by the functors associating to each involutive bisemilattice the semilattice direct system of Boolean algebras corresponding to its P\l onka sum representation. Conversely, to each semilattice direct system (of Boolean algebras), it is associated the P\l onka sum. %For this reason, in the following section we ...\textcolor{red}{dirlo bene}
Upon considering Stone duality \cite{Stone37} between Boolean algebras and Stone spaces, $\mathfrak{SA}$ for short, namely compact totally disconnected Hausdorff topological spaces (see e.g. \cite{J82}), we have that 

\begin{theorem}[\mbox{\cite[Thm.~4.6]{Loi}}]\label{th:finibsldual}
The categories $\mathfrak{IBSL}$ and \emph{Sem-inv-}$\mathfrak{SA}$ are dually equivalent.
\end{theorem}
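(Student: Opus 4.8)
The plan is to obtain the dual equivalence by composition, chaining together three ingredients: the equivalence $\mathfrak{IBSL}\simeq\text{\emph{Sem-dir-}}\mathfrak{BA}$ already recorded in Theorem~\ref{th: IBSL e Sem-dir-BA sono equivalenti}; the classical Stone dual equivalence between $\mathfrak{BA}$ and $\mathfrak{SA}$; and the lifting principle of \cite[Remark 3.6]{Loi}, which promotes a dual equivalence between two categories to a dual equivalence between their associated direct and inverse semilattice systems. Composing a covariant equivalence with a contravariant (dual) equivalence produces a dual equivalence, and this is exactly what delivers the statement.

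First I would fix the two Stone functors $S\colon\mathfrak{BA}\to\mathfrak{SA}$ and $S^{-1}\colon\mathfrak{SA}\to\mathfrak{BA}$ witnessing Stone duality: $S$ sends a Boolean algebra to its space of ultrafilters endowed with the Stone topology, $S^{-1}$ sends a Stone space to its Boolean algebra of clopen subsets, and a Boolean homomorphism $h\colon\A\to\B$ is carried contravariantly to the continuous map $S(h)\colon S(\B)\to S(\A)$ given by preimage. I would then record the unit and counit natural isomorphisms of this dual equivalence, as these are what will be transported to the system level.

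Next I would make explicit how these functors act on a semilattice direct system $\mathbb{A}=\pair{\A_i,p_{ii'},I}$ of Boolean algebras. The index semilattice $I$ is kept fixed, every term $\A_i$ is replaced by its Stone dual $S(\A_i)$, and each transition morphism $p_{ii'}\colon\A_i\to\A_{i'}$ (defined for $i\leqs i'$) is replaced by $S(p_{ii'})\colon S(\A_{i'})\to S(\A_i)$, which now runs in the opposite direction. Because $S$ is contravariant, the coherence conditions $p_{ii}=\mathrm{id}$ and $p_{i'i''}\circ p_{ii'}=p_{ii''}$ translate precisely into the defining conditions of a semilattice inverse system in $\mathfrak{SA}$, so that direct systems are sent to honest inverse systems. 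The same contravariance carries a morphism of direct systems to a morphism of inverse systems, the commuting squares being preserved by functoriality, and the unit and counit isomorphisms of Stone duality lift componentwise to natural isomorphisms at the system level. This is the content of \cite[Remark 3.6]{Loi}, and it yields a dual equivalence between $\text{\emph{Sem-dir-}}\mathfrak{BA}$ and $\text{\emph{Sem-inv-}}\mathfrak{SA}$.

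Finally I would close by composition: Theorem~\ref{th: IBSL e Sem-dir-BA sono equivalenti} supplies an equivalence $\mathfrak{IBSL}\simeq\text{\emph{Sem-dir-}}\mathfrak{BA}$, and composing its functors with the contravariant system-level functors of the previous paragraph produces a pair of contravariant functors between $\mathfrak{IBSL}$ and $\text{\emph{Sem-inv-}}\mathfrak{SA}$ together with the requisite natural isomorphisms, establishing the dual equivalence. I expect the only genuine obstacle to lie in the lifting step: one must check with care that reversing the transition morphisms while keeping the index semilattice fixed really does send direct systems to inverse systems, and that morphisms of systems transform coherently with their underlying semilattice maps. Once \cite[Remark 3.6]{Loi} is invoked for that verification, the remainder is the routine observation that an equivalence composed with a dual equivalence is again a dual equivalence.
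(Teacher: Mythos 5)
Your proposal is correct and matches the paper's own treatment: the paper presents this theorem as imported from \cite[Thm.~4.6]{Loi}, obtained exactly as you describe, by combining the equivalence of Theorem~\ref{th: IBSL e Sem-dir-BA sono equivalenti} with Stone duality between $\mathfrak{BA}$ and $\mathfrak{SA}$, lifted to the level of semilattice direct/inverse systems via the lifting principle of \cite[Remark 3.6]{Loi}. The composition of the covariant equivalence with the lifted dual equivalence is precisely the argument the paper's surrounding text sketches.
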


For the purpose of the present work, we restrict Stone duality to the finite setting, which reduces to the well-known duality between the category of finite Boolean algebras and their homomorphisms $\mathfrak{FBA}$,
and the category of finite sets and set-functions $\mathfrak{FS}$.
The functor implementing such duality maps any Boolean algebra $\A$ to the set $\widehat{A}$ given by the atoms of $\mathbf{A}$.
Thanks to the previous considerations, the problem of counting finite involutive bisemilattices coincides with counting semilattice direct systems (with finite index set) of finite Boolean algebras.

\section{Linearly ordered \IBSL}\label{sez: classe l-ordered}

We confine our concern to a specific class of involutive bisemilattices, namely those ones whose corresponding direct system has a linearly ordered index set. We call this class
%refer to this class as  DV comm
\emph{linearly ordered} involutive bisemilattices, $\LIBSL$ for short.  %(\textcolor{red}{Is it a subvariety?})
\begin{remark}
The class $\LIBSL$ is closed under subalgebras and homomorphic images but not under products. Closure under subalgebras is obvious. For homomorphic images, it is enough to observe that any homomorphism between elements in $\LIBSL$ corresponds to a morphism between the equivalent semilattice direct systems, which, restricted to the index sets, is a homomorphism of semilattices. Moreover, any homomorphic image of a totally ordered semilattice is totally ordered. As regards products, it is clear that the product of (semilattice direct) systems whose index set is linearly order may, in general, be a system whose index set is not linearly ordered.
\end{remark}

In the following part, we provide a first-order characterization of the class $\LIBSL$. Recall from \cite{Bonzio16} that, given an \emph{involutive bisemilattice} $\B$, an element $a\in B$ is called \emph{positive} if $a\vee \neg a = a$. We denote by $P(\B)$ the set of positive elements of $\B$.
\begin{remark}\label{rem: i positivi sono 1 booleani}
The set $P(\B)$ of positive elements of an involutive bisemilattice $\B$ coincides with the set of the constants $1$ of each Boolean algebra in the P\l onka decomposition of $\B$. In other words, %as by Theorem \ref{the: rappresentazione Plonka}, 
$\B\cong\PLA$, with $\mathbb{A}=\pair{\A_i , p_{ii'}, I} $  a semilattice direct system of Boolean algebras, then $P(\B)=\bigcup_{i\in I}\{1_i\}$, where $1_i$ denotes the element $1$ in the Boolean algebra $A_i$. Checking that $1_i$ is positive for each $i\in I$ is immediate. On the other hand, suppose that $a\in P(\B)$, i.e. $a\vee \neg a = a$. Clearly, $a\in A_i$, for some $i\in I$, hence $\neg a\in A_i$. Therefore $ a= a\vee^{\PL}\neg a = a\vee^{\A_i}\neg a = 1_{i}$. 
\end{remark}

In the following result, we refer (with a slight abuse of notation) to the semilattice of indexes of a P\l onka sum as $\pair{I,\leq}$ and to the semilattice formed by the positive elements of an involutive bisemilattice with respect to the reducts $\wedge $ ($\vee$, respectively) as $\pair{P(\B),\wedge}$ ($\pair{P(\B),\vee}$, respectively).

\begin{proposition}\label{prop: indici isomorfi ai positivi}
Let $\B\in\IBSL$ and let $P(\B)$ be the set of positive elements. Then
 $$ 1.~ \pair{P(\B), \land}\cong\pair{P(\B), \vee}; \qquad
   2.~ \pair{P(\B), \vee}\cong\pair{I,\leq}.$$
\end{proposition}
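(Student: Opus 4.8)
The plan is to read off everything from the P\l onka sum representation together with Remark~\ref{rem: i positivi sono 1 booleani}, which identifies $P(\B)$ with the set $\{1_i : i\in I\}$ of top elements of the Boolean algebras $\A_i$ occurring in the decomposition $\B\cong\PLA$, where $\mathbb{A}=\pair{\A_i,p_{ii'},I}$ (such a decomposition exists by Theorem~\ref{th: IBSL e Sem-dir-BA sono equivalenti}). Accordingly, I would introduce the bijection $\phi\colon I\to P(\B)$, $i\mapsto 1_i$, supplied by the Remark, and show simultaneously that $\phi$ is a $\vee$-semilattice isomorphism onto $\pair{P(\B),\vee}$ and that the operations $\wedge$ and $\vee$ of $\B$ agree on $P(\B)$; items~1 and~2 then both follow.

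First I would establish item~2. Given $1_i,1_{i'}\in P(\B)$, I evaluate the P\l onka join via Definition~\ref{def: somma di Plonka}: putting $j=i\vee i'$ in the index semilattice $\pair{I,\leq}$, one has $1_i\vee^{\PL}1_{i'}=\vee^{\A_j}(p_{ij}(1_i),p_{i'j}(1_{i'}))$. The crucial observation is that each transition map $p_{ij}$ is a Boolean algebra homomorphism, hence preserves the top element, so $p_{ij}(1_i)=p_{i'j}(1_{i'})=1_j$; since $\vee^{\A_j}(1_j,1_j)=1_j$, this yields $1_i\vee^{\PL}1_{i'}=1_j=1_{i\vee i'}$. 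In particular $P(\B)$ is closed under $\vee$, and $\phi(i\vee i')=\phi(i)\vee^{\PL}\phi(i')$; being a bijection, $\phi$ is the desired isomorphism $\pair{P(\B),\vee}\cong\pair{I,\leq}$.

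For item~1 I would run the analogous computation for the meet. Using the defining identity $x\wedge y\approx\neg(\neg x\vee\neg y)$ (axiom \textbf{I5}) and the fact that $\neg$ is unary, hence acts fibrewise, I get $\neg 1_i=0_i$ and $\neg 1_{i'}=0_{i'}$, the bottom elements of $\A_i,\A_{i'}$. As the $p_{ij}$ preserve $0$ as well, the same reasoning gives $0_i\vee^{\PL}0_{i'}=0_j$, whence $1_i\wedge^{\PL}1_{i'}=\neg 0_j=1_j=1_{i\vee i'}$. Thus $\wedge$ and $\vee$ return exactly the same value on every pair of positive elements, so $\pair{P(\B),\wedge}$ and $\pair{P(\B),\vee}$ share the same universe and the same operation; they coincide and are a fortiori isomorphic via the identity map.

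I expect no serious obstacle, since the whole argument is driven by the single fact that the transition morphisms, being Boolean homomorphisms, preserve both $0$ and $1$, which forces $\wedge$ and $\vee$ to collapse onto the index join when restricted to positives. The only genuinely harder alternative would be a purely equational derivation of $a\wedge b\approx a\vee b$ for positive $a,b$ directly from \textbf{I1}--\textbf{I8}, bypassing the representation; invoking Theorem~\ref{th: IBSL e Sem-dir-BA sono equivalenti} and Remark~\ref{rem: i positivi sono 1 booleani} avoids that difficulty entirely.
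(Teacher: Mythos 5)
Your proof is correct and takes essentially the same route as the paper's: both identify $P(\B)$ with $\{1_i\}_{i\in I}$ via Remark~\ref{rem: i positivi sono 1 booleani}, use the bijection $i\mapsto 1_i$ as the isomorphism with $\pair{I,\leq}$, and reduce everything to the fact that the transition maps, being Boolean homomorphisms, send constants to constants. The only cosmetic difference is that you compute $1_i\wedge 1_j$ by routing through axiom \textbf{I5} and the fibrewise action of $\neg$, whereas the paper evaluates $\wedge^{\PL}$ directly componentwise from Definition~\ref{def: somma di Plonka}; both computations yield $1_{i\vee j}$ and hence the collapse of $\wedge$ and $\vee$ on $P(\B)$.
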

\begin{proof}
\begin{enumerate}
\item The isomorphism is given by the identity map. We just check that, for any $a,b\in P(\B)$, $a\land b = a\vee b$. In virtue of Remark \ref{rem: i positivi sono 1 booleani}, we can assume that $a=1_i$ and $b=1_j$, for some $i,j\in I$. Let $k= i\vee j$, then: 
%\[
$a\wedge b = 1_i \wedge^{\PL} 1_j  = p_{ik}(1_i)\wedge^{\A_k} p_{jk}(1_j)  = 1_k \wedge^{\A_k} 1_k = 1_k = 1_k \vee^{\A_k} 1_k = p_{ik}(1_i)\vee_{\A_k} p_{jk}(1_j)  = 1_i \vee^{\PL} 1_j  = a\vee b.$
%\]

\item We again assume the identification highlighted in Remark \ref{rem: i positivi sono 1 booleani}. Consider the map $f\colon I\to P(\B)$, defined as $f(i)\coloneqq 1_i$. The map is invertible (with inverse $g\colon P(\B)\to I$, $g(1_i)=i$). Moreover, we check that $f$ is a homomorphism (of semilattices). Let $i,j\in I$ and $i\vee j = k$. Then 
%\[
$f(i\vee j) = f (k) = 1_k = 1_k \vee 1_k = p_{ik}(1_i) \vee p_{jk}(1_j) = 1_i \vee 1_j = f(i)\vee f (j).$
%\]
\qed
\end{enumerate}
\end{proof}

The above results shows that any consideration about the index set of the P\l onka sum representation of an involutive bisemilattice can be expressed over the (partially ordered) set of its positive elements. This turns out to be convenient since the subset of positive elements is equationally definable. Observe, moreover that the two partial orders induced by the binary operations of an involutive bisemilattice (see \cite{Bonzio16} for details) coincide over the set of positive elements.

\begin{corollary}\label{cor: descrizione I-IBSL}
Let $\B\in\IBSL$ and $\pair{P(\B),\leq}$ the poset of its positive elements. The following are equivalent:
$$ 1.~ \B\in\LIBSL;\qquad 2.~ \text{either }x\leq y\text{ or }y \leq x,\text{ for any }x,y\in P(\B).$$
\end{corollary}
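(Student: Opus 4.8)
The plan is to prove Corollary~\ref{cor: descrizione I-IBSL} by transferring the order structure between the index semilattice $\pair{I,\leq}$ and the poset of positive elements $\pair{P(\B),\leq}$, exploiting the isomorphisms established in Proposition~\ref{prop: indici isomorfi ai positivi}. By the defining property of the class, $\B\in\LIBSL$ means precisely that the index set $I$ of its P\l onka sum representation is linearly ordered. So the heart of the matter is to show that the order on $I$ is faithfully reproduced on $P(\B)$.

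First I would recall from Proposition~\ref{prop: indici isomorfi ai positivi}(2) that the map $f\colon I\to P(\B)$, $f(i)=1_i$, is a semilattice isomorphism from $\pair{I,\vee}$ to $\pair{P(\B),\vee}$. Since an isomorphism of join-semilattices is in particular an order-isomorphism for the induced partial orders (because in any semilattice $i\leq i'$ is definable as $i\vee i'=i'$, a condition preserved and reflected by $f$), the poset $\pair{I,\leq}$ is order-isomorphic to $\pair{P(\B),\leq}$, where the latter order is the one induced by $\vee$. Here I would invoke the remark immediately preceding the corollary, namely that the two partial orders induced by $\vee$ and by $\wedge$ agree on $P(\B)$; combined with part~(1) of the Proposition (which gives $a\wedge b=a\vee b$ for positive $a,b$), this guarantees that $\pair{P(\B),\leq}$ is unambiguously defined and matches the join-induced order used in the isomorphism.

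With the order-isomorphism $\pair{I,\leq}\cong\pair{P(\B),\leq}$ in hand, the equivalence is then immediate: $\B\in\LIBSL$ iff $I$ is totally ordered iff, transporting along $f$, $P(\B)$ is totally ordered, which is exactly the condition that for all $x,y\in P(\B)$ either $x\leq y$ or $y\leq x$. For the forward direction I would take $x,y\in P(\B)$, write $x=1_i$ and $y=1_j$ via Remark~\ref{rem: i positivi sono 1 booleani}, use linearity of $I$ to get (say) $i\leq j$, and conclude $x\leq y$ by monotonicity of $f$. For the converse I would run the same argument through $g=f^{-1}$.

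I do not expect any serious obstacle here, since all the substantive work has already been done in Proposition~\ref{prop: indici isomorfi ai positivi} and the surrounding remarks. The only point requiring a little care is the justification that a semilattice isomorphism is automatically an order-isomorphism for the induced order, and the observation that the $\vee$-induced and $\wedge$-induced orders coincide on $P(\B)$ so that speaking of the single poset $\pair{P(\B),\leq}$ is legitimate; both are routine but should be stated explicitly to make the reduction from $I$ to $P(\B)$ watertight.
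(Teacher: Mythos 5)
Your proposal is correct and follows exactly the route the paper intends: the corollary is stated as an immediate consequence of Proposition~\ref{prop: indici isomorfi ai positivi}, whose semilattice isomorphism $f\colon I\to P(\B)$, $f(i)=1_i$, transfers linearity of $\pair{I,\leq}$ to $\pair{P(\B),\leq}$ and back, just as you argue. Your explicit remarks on why a semilattice isomorphism is an order-isomorphism and why the $\vee$- and $\wedge$-induced orders agree on $P(\B)$ merely spell out details the paper leaves implicit.
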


We provide a useful criteria to detect isomorphic copies of linearly ordered involutive bisemilattices.

\begin{lemma}\label{lem:iso1}
Let $\mathbb{A}=\langle \A_{i}, p_{ii'}, I\rangle$ and $\mathbb{B}=\pair{\B_i, q_{ii'}, I}$ be two finite semilattice direct systems of Boolean algebras, with $I$ linearly ordered and containing no trivial algebras. Then, the following statements are equivalent: 
\begin{enumerate}
\item $\mathbb{A}\cong\mathbb{B}$ 
\item$\A_i\cong\B_i$, for every $i\in I$, and $\mid\widehat{A}_{i'}/ker (\widehat{p}_{ii'})\mid\; =\;\mid \widehat{B}_{i'} /ker (\widehat{q}_{ii'})\mid $, for every $i < i'$. 
\end{enumerate}
 % generates the same partition of $\widehat{\mathbb{A}_1}$ in $\widehat{\mathbb{A}_0}$. 
\end{lemma}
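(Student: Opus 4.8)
The plan is to prove the equivalence by establishing both directions, with the forward direction $(1)\Rightarrow(2)$ being essentially bookkeeping and the reverse direction $(2)\Rightarrow(1)$ carrying the real content. For $(1)\Rightarrow(2)$, suppose $\mathbb{A}\cong\mathbb{B}$ via a morphism $(\varphi,\{f_i\})$ of semilattice direct systems. Since the index set $I$ is the same linearly ordered semilattice on both sides and $\varphi\colon I\to I$ is a semilattice isomorphism, and since $I$ is linearly ordered, $\varphi$ must be the identity (an order-isomorphism of a finite chain to itself is the identity). Hence each $f_i\colon\A_i\to\B_i$ is a Boolean isomorphism, giving $\A_i\cong\B_i$. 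The commutativity of the morphism squares forces $f_{i'}\circ p_{ii'}=q_{ii'}\circ f_i$; applying the finite Stone duality functor (atoms) this becomes a corresponding relation among the dual set-functions $\widehat{p}_{ii'}$, $\widehat{q}_{ii'}$ and the bijections $\widehat{f}_i$, $\widehat{f}_{i'}$, from which the equality of the cardinalities of the quotients $\widehat{A}_{i'}/\ker(\widehat{p}_{ii'})$ and $\widehat{B}_{i'}/\ker(\widehat{q}_{ii'})$ follows, since $\widehat{f}_{i'}$ maps the fibers of $\widehat{p}_{ii'}$ bijectively onto those of $\widehat{q}_{ii'}$.

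The substantive direction is $(2)\Rightarrow(1)$. Here I would work entirely on the dual side, using the finite duality between $\mathfrak{FBA}$ and $\mathfrak{FS}$ so that each Boolean isomorphism $\A_i\cong\B_i$ becomes a bijection $\widehat{A}_i\to\widehat{B}_i$ of atom-sets, and each transition morphism $p_{ii'}$ becomes a set-function $\widehat{p}_{ii'}\colon\widehat{A}_{i'}\to\widehat{A}_i$ (note the reversal of direction). The goal is to build a family of bijections $\{g_i\colon\widehat{A}_i\to\widehat{B}_i\}$ that commutes with the dual transition maps, i.e. $g_i\circ\widehat{p}_{ii'}=\widehat{q}_{ii'}\circ g_{i'}$ for all $i<i'$; dualizing back then yields the required isomorphism of systems $\mathbb{A}\cong\mathbb{B}$. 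The hypothesis gives, for each covering pair in the chain, that $\widehat{p}_{ii'}$ and $\widehat{q}_{ii'}$ have fibers of matching total arrangement: since $|\widehat{A}_i|=|\widehat{B}_i|$ and $|\widehat{A}_{i'}/\ker(\widehat{p}_{ii'})|=|\widehat{B}_{i'}/\ker(\widehat{q}_{ii'})|$, the two functions are ``equivalent'' set-functions, meaning one can match their fibers up to relabeling.

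The core construction proceeds by induction along the finite chain $I$, starting from the least element (which exists and whose fiber structure anchors the compatibility conditions). At the base, choose $g_{i_0}$ to be any bijection induced by $\A_{i_0}\cong\B_{i_0}$. At the inductive step, passing from $i$ to its successor $i'$ in the chain, I already have $g_i$ and must define $g_{i'}$ so that the commuting square closes. The function $\widehat{p}_{ii'}$ partitions $\widehat{A}_{i'}$ into fibers indexed by points of $\widehat{A}_i$ (together with points outside the image), and likewise $\widehat{q}_{ii'}$ partitions $\widehat{B}_{i'}$; the equality of quotient cardinalities together with $|\widehat{A}_{i'}|=|\widehat{B}_{i'}|$ lets me choose $g_{i'}$ sending each fiber of $\widehat{p}_{ii'}$ over a point $x$ bijectively onto the fiber of $\widehat{q}_{ii'}$ over $g_i(x)$, using the already-fixed $g_i$ to align the fibers and arbitrary bijections within each matched pair of fibers. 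The transition-morphism coherence condition $p_{i'i''}\circ p_{ii'}=p_{ii''}$ (equivalently $\widehat{p}_{ii'}\circ\widehat{p}_{i'i''}=\widehat{p}_{ii''}$ after dualizing) guarantees that these fiber-partitions refine compatibly along the chain, so the inductively chosen $g_{i'}$ remains consistent with all earlier maps.

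I expect the main obstacle to be verifying this last coherence point: that the fiber-by-fiber bijections chosen independently at each covering step genuinely assemble into a morphism of the whole direct system, i.e. that the squares commute not only for consecutive indices but for every pair $i<i'$. The reason this works is that in a linear order every transition $p_{ii'}$ factors through the intermediate transitions, so commutativity of all successor-squares propagates to commutativity of all squares by composition; nonetheless one must check that the freedom in choosing bijections within matched fibers does not destroy this propagation, and this is exactly where linearity of $I$ and the compositional coherence of the direct system are essential. Handling the atoms that lie outside the image of $\widehat{p}_{ii'}$ (corresponding to the kernel of the dual Boolean map on the algebra side) requires care but is controlled by the cardinality hypothesis, since $|\widehat{A}_{i'}|=|\widehat{B}_{i'}|$ fixes the total count and the quotient-cardinality equality fixes the number of nonempty fibers.
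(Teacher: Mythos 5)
Your $(1)\Rightarrow(2)$ direction is correct and is essentially the paper's argument (the paper makes the fiber correspondence explicit via a bijection between the two quotients, but the content is the same). The genuine gap is in the inductive step of your $(2)\Rightarrow(1)$ direction, at the claim that equality of quotient cardinalities together with $|\widehat{A}_{i'}|=|\widehat{B}_{i'}|$ ``lets me choose $g_{i'}$ sending each fiber of $\widehat{p}_{ii'}$ over a point $x$ bijectively onto the fiber of $\widehat{q}_{ii'}$ over $g_i(x)$.'' Such a $g_{i'}$ exists only if $|\widehat{p}_{ii'}^{-1}(x)|=|\widehat{q}_{ii'}^{-1}(g_i(x))|$ for every $x$, and hypothesis (2) gives nothing of the sort: it only says that the two dual maps have the same \emph{number} of non-empty fibers, not that the fibers can be matched size by size, let alone that they match under the previously fixed $g_i$. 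Concretely, let $I=\{0<1\}$, let $\A_0=\B_0$ be the Boolean algebra with atoms $\{a,b\}$, let $\A_1=\B_1$ be the one with atoms $\{w,x,y,z\}$, and take transition maps whose duals are $\widehat{p}_{01}\colon w,x,y\mapsto a,\ z\mapsto b$ and $\widehat{q}_{01}\colon w,x\mapsto a,\ y,z\mapsto b$. Both quotients have exactly two elements, so condition (2) holds; but the fiber-size multisets are $\{3,1\}$ and $\{2,2\}$, and since (as your own forward argument shows) any isomorphism of systems carries fibers of $\widehat{p}_{01}$ bijectively onto fibers of $\widehat{q}_{01}$, no choice of $g_0,g_1$ (equivalently, of Boolean isomorphisms $f_0,f_1$) can close the square.

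Note that the obstacle is not the one you flagged as the main one --- propagating commutativity from consecutive squares to all squares is indeed harmless in a chain, since $\widehat{p}_{ii'}\circ\widehat{p}_{i'i''}=\widehat{p}_{ii''}$ --- but precisely the fiber-size matching you passed over, and it cannot be patched: the example above satisfies (2) and violates (1), so your construction fails because the implication $(2)\Rightarrow(1)$ itself fails as stated. For comparison, the paper's own proof of this direction is the bare assertion that $(id,f_i)$ ``gives the desired isomorphism,'' which silently assumes exactly the fiber alignment that is missing; your more explicit dual-side strategy has the merit of exposing where the information in (2) runs out. A hypothesis that would make your induction viable is to require, for all $i<i'$, that $\ker(\widehat{p}_{ii'})$ and $\ker(\widehat{q}_{ii'})$ have the same multiset of block sizes (and then to choose the bijections $g_i$ so as to respect these multisets along the chain); equality of the mere number of blocks is strictly weaker as soon as some $\widehat{A}_{i'}$ has at least four atoms mapping onto at least two.
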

\begin{proof}
($\Rightarrow$) Assume that $\mathbb{A}\cong\mathbb{B}$ via an isomorphism $(\varphi , f_{i})$, for each $i\in I$. Since $\mathbb{A}$ and $\mathbb{B}$ share the same, linearly ordered, index set$I$, we necessarily have that $\varphi=id$. %\deleted[id=Stef]{At first, we observe that $\varphi = id$, since $I$ is linearly ordered.} 
Moreover, for each $i\in I$, $\A_i\cong\B_i$ via the Boolean isomorphism $f_i$, and the following diagram on the left 
(we deliberately drop indexes from $p$ and $q$ to make notation less cumbersome) is commutative, for each $i < i'$,
and in virtue of the duality established in Theorem~\ref{th:finibsldual} and the definition of inverse systems, 
the following diagram on the right is also commutative:

%\begin{figure}[h!]

\begin{center}\leavevmode
\xymatrix{%@R=1pc@C=1pc{
 \mathbf{A}_i\ar@{->}[r]^-{p}\ar@{->}[d]_-{f_{i}} & \mathbf{A}_{i'}\ar@{->}[d]^-{f_{i'}} &&\\
 \mathbf{B}_{i} \ar@{->}[r]^-{q} 	& \mathbf{B'}_{i} &&
	}
\xymatrix{%@R=1pc@C=1pc{
 \widehat{A}_i\ar@{->}[d]_-{\widehat{f}^{-1}_{i}} & \widehat{A}_{i'}\ar@{->}[l]_-{\widehat{p}}\ar@{->}[d]^-{\widehat{f}^{-1}_{i'}}\\
 \widehat{B}_{i} 	& \widehat{B'}_{i} \ar@{->}[l]^-{\widehat{q}}
	}
\end{center}

By the first isomorphism theorem (for sets), we have that there exist two (unique) embeddings $\psi\colon \widehat{A}_{i'}/ker(\widehat{p}\;)\to\widehat{A}_i $ and $\chi\colon \widehat{B}_{i'}/ker(\widehat{q}\;)\to\widehat{B}_i$ such that $\widehat{p}=\psi\circ\pi_{\widehat{A}_{i'}}$ and $\widehat{q}=\chi\circ\pi_{\widehat{B}_{i'}}$, where $\pi_{\widehat{A}_{i'}}$ and $\pi_{\widehat{B}_{i'}}$ indicate the natural projections onto the quotients $\widehat{A}_{i'}/ker(\widehat{p}\;)$, $\widehat{B}_{i'}/ker(\widehat{q}\;)$, respectively. The above diagram can therefore be split into the following:

\begin{center}\leavevmode
\xymatrix@R=2pc@C=6pc{
\widehat{A}_{i'}\ar@{->}[r]^-{\widehat{f}^{-1}_{i'}}\ar@{->}[d]_-{\pi_{\widehat{A}_{i'}}}&\widehat{B}_{i'}\ar@{->}[d]^-{\pi_{\widehat{B}_{i'}}} \\
\widehat{A}_{i'}/ker(\widehat(p)\ar@{-->}[r]^-{\Phi}\ar@{->}[d]_-{\psi}&\widehat{B}_{i'}/ker(\widehat{q})\ar@{->}[d]^-{\chi} \\
 \widehat{A}_i\ar@{->}[r]_-{\widehat{f}^{-1}_{i}} & \widehat{B}_{i}
	}
\end{center}

We define the map $\Phi\colon\widehat{A}_{i'}/ker(\widehat{p}\;) \to \widehat{B}_{i'}/ker(\widehat{q}\;) $ 
as $\Phi([a]_{p})\coloneqq [\widehat{f}^{-1}_{i'}(a)]_{q}$.

We claim that $\Phi$ is a bijection and this would conclude this part of the proof.

\noindent
In order to show the claim, let $[b]\in\widehat{B}_{i'}/ker(q) $, then $[b]=\pi_{\widehat{B}_{i'}}(b)$, for some $b\in\widehat{B}_{i'}$. By surjectivity of $\widehat{f}^{-1}_{i'}$, there exists an element $a\in\widehat{A}_{i'}$ such that $b=\widehat{f}^{-1}_{i'}(a)$. Therefore $[b]_{q}=[\widehat{f}^{-1}_{i'}(a)]_{q} = \Phi([a]_{p})$, i.e. $\Phi$ is surjective. 
To show that $\Phi$ is also injective, assume $[a]_{p}\neq [b]_{p}$, i. e. $a\neq b$, with $a,b\in \widehat{A}_{i'}$. Suppose, in view of a contradiction, that $\Phi([a]_p)=\Phi([b]_{p})$, i. e. $[\widehat{f}^{-1}_{i'}(a)]_{q}=[\widehat{f}^{-1}_{i'}(b)]_{q}$. By assumption and the fact that $\psi$ is an embedding, we have that $\psi\circ \pi_{\widehat{A}_{i'}}(a)\neq \psi\circ \pi_{\widehat{A}_{i'}}(b)$, i.e. $p(a)\neq p(b)$, whence $\widehat{f}^{-1}_{i}\circ p (a)\neq \widehat{f}^{-1}_{i}\circ p(b) $, since $\widehat{f}^{-1}_{i}$ is a bijection. On the other hand, 
$q\circ \widehat{f}^{-1}_{i'} (a)  =  \chi\circ\pi_{\widehat{B}_{i'}}\circ \widehat{f}^{-1}_{i'}(a) = \chi\circ\pi_{\widehat{B}_{i'}}\circ \widehat{f}^{-1}_{i'}(b) = q\circ \widehat{f}^{-1}_{i'} (b),$
which is in contradiction with the commutativity of the above diagram. This shows our claim. 
\vspace{5pt}

\noindent
($\Leftarrow$) Assume that $\A_i\cong\B_i$, by the family of isomorphisms $f_i\colon \A_i\to\B_i$, for each $i\in I$, and that there exists a bijection $\Phi\colon\widehat{A}_{i'}/ker(\widehat{p}\?) \to \widehat{B}_{i'}/ker(\widehat{q}\?) $. Then, it is easy to check that $(id, f_i)$ gives the desired isomorphism. %(the commutativity of the diagrams is proved using duality and the fact that $\Phi$ is a bijection). 
\qed
\end{proof}

\begin{example}
The two linearly ordered involutive bisemilattices in the following picture (lines indicate orders in the Boolean components, dashed lines indicate Boolean homomorphisms) are isomorphic (this is an %easy  DV comm
consequence of Lemma \ref{lem:iso1}).
%\begin{figure}[h!]
\begin{center}\leavevmode
\xymatrix@R=1pc@C=1pc{
			&			&{1_{1}}\ar@{..}[dddll]&&			&		&{1_{1}}\ar@/^0.5pc/@{..}[ddd]&\\
			&			&{0_{1}}\ar@{-}[u]\ar@{..}[dd]&&		&			&{0_{1}}\ar@{-}[u]	&\\
			&{1_{0}}\ar@{..}[uur]&				&&			&{1_{0}}\ar@/^0.5pc/@{..}[uur]&		&\\
{a}\ar@{-}[ur]	&			&{a'}\ar@{-}[ul]		&&{b}\ar@{-}[ur]\ar@/_0.5pc/@{..}[uurr]&	&{b'}\ar@{-}[ul]	&\\
			&{0_{0}}\ar@{-}[ur]\ar@{-}[ul]\ar@{..}[uuur]&	&&			&{0_{0}}\ar@{-}[ur]\ar@{-}[ul]\ar@{..}[uuur]&&
	}
\end{center}
%\end{figure}
\noindent
In order to exhibit a concrete isomorphism, observe that the two algebras are constructed using the very same index set (the two element lattice).
An isomorphism is given by considering the identity map $id$ on the lattice of indexes, the unique isomorphism on the two elements Boolean algebras and the Boolean isomorphism given by $\varphi(a)= \neg b$ (and $\varphi(\neg a)= b$) between the 4-elements Boolean algebras. 
\end{example}
The graphical convention adopted in the above example (dotted lines for homomorphisms between algebras in the P\l onka sum,
black lines for the usual order relation in a Boolean algebra) will be used throughout the paper.

\begin{remark}
Notice that the non-triviality assumption in Lemma \ref{lem:iso1} is crucial as witnessed by the following example, 
where we consider the two semilattice direct systems 
$\pair{\{\A_0, \mathbf{1}_1, \mathbf{1}_2\}, p_{ii'} , \{0,1,2\}}$ and $\pair{\{\A_{0},  \mathbf{1}_1,\mathbf{1}_2\}, q_{ii'} , \{0,1,2\}}$. %(we drop the indexes of transition morphisms as they )

\begin{center}\leavevmode
\xymatrix@R=0.7pc@C=0.7pc{
 & \bullet_2\ar@{->}[ddrr] &  & \bullet_1 &  \\
 & &  & & \\
 & \bullet_1\ar@{..}[uu]\ar@{->}[uurr]&  & \bullet_2\ar@{..}[uu]  & \\
 & & \Phi & & \\
 & \A_0 \ar@{..}[uu]\ar@{->}[rr]&  & \A_0\ar@{..}[uu] & 
	}
\end{center}

The map $\Phi$ depicted between the two systems is not an isomorphism (as it is clearly not an isomorphism on the index set), however $1_{2}/ker (\widehat{p}_{_{12}}) = 1_{2} = 1_{2} / ker (\widehat{q}_{_{12}})$.

\end{remark}

We denote by $\mathbb{A}_{p_{i-1,i}} $ the family $\{\pair{\A_{i}, p_{i-1,i}, I}\}_{i\in I} $ of all finite semilattice direct systems of Boolean algebras obtainable from the family of algebras $\{\A_i\}_{i\in I}$, indexed over the linearly ordered index set $I$. When clear from the context, we will write $\mathbb{A}_p$ instead of $\mathbb{A}_{p_{i-1,i}} $.

We are interested in the following question: what is the number of non-isomorphic elements in the family $\mathbb{A}_{p}$? This, in turn, will provide an answer to the question about how many linearly ordered involutive bisemilattices have the cardinality $\bigcup_{i\in I}\mid A_{i}\mid$, up to isomorphism. %, i.e. linearly ordered involutive bisemilattices, up to isomorphism? %$\{\mathbb{A}\}_{p\in Hom(A_{0}, A_{1})}$? 
%\textcolor{red}{Scrivere meglio la domanda}

In the light of Theorem~\ref{th:finibsldual} and Lemma \ref{lem:iso1}, the answer to is provided by considering 
the dual semilattice inverse systems $\widehat{\mathbb{A}}_{\hat{p}}$. %(where $\hat{p}\in Hom (\widehat{A_{i}},\widehat{A_{i-1}})$). 

\begin{lemma}\label{lem: conteggio hom con isofilter}
Let $\mathbb{A}_{p_{_{01}}}=\pair{\{\A_0, \A_1\}, p , \{0<1\}}$ be a family of linearly ordered semilattice direct system of Boolean algebras. The number of non-isomorphic involutive bisemilattices obtained over $\mathbb{A}_{p_{01}}$ is
the number of non-isomorphic involutive bisemilattices of cardinality $\mid A_0 \mid + \mid A_1 \mid$ and is equal to
\[
N(\mathbb{A}_{p_{_{01}}})\coloneqq N(A_0, A_1)=\min (\mid \widehat{A}_0\mid, \mid\widehat{A}_1\mid),
\]
where $\widehat{A}_0$ ($\widehat{A}_{1}$, resp.) is the dual space of $\A_0$ ($\A_1$, resp.)

\end{lemma}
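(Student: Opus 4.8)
The plan is to reduce the counting problem to a purely combinatorial statement about functions between finite sets, using finite Stone duality together with the isomorphism criterion of Lemma~\ref{lem:iso1}. First I would observe that, since the index set is the two-element chain $\{0<1\}$ and the maps $p_{00},p_{11}$ are forced to be identities, a semilattice direct system in the family $\mathbb{A}_{p_{01}}$ is completely determined by the single Boolean homomorphism $p\coloneqq p_{01}\colon \A_0\to\A_1$. Moreover the P\l onka sum over such a system has universe $A_0\sqcup A_1$, so it has cardinality $|A_0|+|A_1|$; counting non-isomorphic systems thus amounts to counting the homomorphisms $p$ up to the equivalence induced by system-isomorphism.

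Next I would specialize Lemma~\ref{lem:iso1} to this situation. Since all systems in $\mathbb{A}_{p_{01}}$ share the same fibres $\A_0,\A_1$ (which are nontrivial by hypothesis), the condition $\A_i\cong\B_i$ holds automatically, and the lemma tells us that two systems given by $p$ and $q$ are isomorphic precisely when $|\widehat{A}_1/\ker(\widehat{p})|=|\widehat{A}_1/\ker(\widehat{q})|$. Here I would invoke finite Stone duality (the finite restriction of Theorem~\ref{th:finibsldual}): the homomorphism $p$ dualizes to a function $\widehat{p}\colon\widehat{A}_1\to\widehat{A}_0$ between the atom sets, and, crucially, this correspondence is a \emph{bijection} between all Boolean homomorphisms $\A_0\to\A_1$ and all set-functions $\widehat{A}_1\to\widehat{A}_0$. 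Since $\widehat{A}_1/\ker(\widehat{p})$ is in bijection with the image $\widehat{p}(\widehat{A}_1)$ by the first isomorphism theorem for sets, the quantity controlling isomorphism is exactly the cardinality of the image of $\widehat{p}$.

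The counting problem therefore becomes: as $f$ ranges over all functions from a set of size $m\coloneqq|\widehat{A}_1|$ to a set of size $n\coloneqq|\widehat{A}_0|$, how many distinct values does $|f(\widehat{A}_1)|$ take? I would finish with the elementary combinatorial observation that, because $\A_0$ and $\A_1$ are nontrivial (hence $m,n\geq 1$), the image size ranges over exactly $\{1,2,\dots,\min(m,n)\}$: a constant map realizes the value $1$, an injection (when $m\leq n$) or a surjection (when $m\geq n$) realizes $\min(m,n)$, and every intermediate value is attained by a suitable map. Consequently there are exactly $\min(m,n)=\min(|\widehat{A}_0|,|\widehat{A}_1|)$ isomorphism classes, which is the claimed value $N(A_0,A_1)$.

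I expect the only genuinely delicate point to be the bookkeeping around duality: one must ensure that every function $\widehat{A}_1\to\widehat{A}_0$ genuinely arises from an admissible transition morphism, so that the full range of image sizes is realized and not merely a subset, and that $|\widehat{A}_1/\ker(\widehat{p})|$ equals the image cardinality $|\widehat{p}(\widehat{A}_1)|$ rather than some count of fibres that includes empty ones. Both are immediate once finite Stone duality is set up as the bijection between Boolean homomorphisms and set-functions, but they are precisely the steps where the contravariance of the duality and the non-triviality hypothesis must be handled with care.
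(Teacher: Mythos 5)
Your proposal is correct and follows essentially the same route as the paper's proof: specialize Lemma~\ref{lem:iso1} to systems sharing the fibres $\A_0,\A_1$, dualize $p$ to a set-function $\widehat{p}\colon\widehat{A}_1\to\widehat{A}_0$ via the finite restriction of Theorem~\ref{th:finibsldual}, and count the attainable values of $|\widehat{A}_1/\ker(\widehat{p}\,)|$, which range over $\{1,\dots,\min(|\widehat{A}_0|,|\widehat{A}_1|)\}$. Your formulation via image cardinalities (where the paper counts the number of blocks of the kernel partitions of $\widehat{A}_1$, capped by $|\widehat{A}_0|$) and your explicit check that every set-function arises from a Boolean homomorphism are the same argument, spelled out slightly more carefully.
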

\begin{proof}
At first observe that two elements in the family $\mathbb{A}_{p_{_{01}}}$ differ only for the Boolean homomorphism from $\A_0$ to $\A_1$. Therefore, by Lemma \ref{lem:iso1}, two involutive bisemilattices constructed over the system $\mathbb{A}_{p_{_{01}}}$ are not isomorphic if and only if $\mid\widehat{A}_{1}/ker (\widehat{p}\?)\mid\; \neq \;\mid \widehat{A}_{0} /ker (\widehat{q}\?)\mid $ (where $p$ and $q$ are the Boolean homomorphisms). In other words, this means that the kernels of $\widehat{p}$ and $\widehat{q}$ generate two partitions, over $\mid\widehat{A}_1\mid$, with a different number of equivalence classes. It is known that the number of equivalence classes partitioning a finite algebra $\A$ into a different number of blocks is equal to $\mid A \mid$. Therefore, if $\mid A_1\mid\leq \mid A_0\mid$ then $ N(A_0, A_1)=\;\mid \widehat{A}_1\mid $. Differently, since we only consider (the number of) partitions induced by (kernels of) maps from $\mid\widehat{A}_1 \mid$ to $\mid\widehat{A}_0 \mid$, we have that $N(A_0, A_1)=\;\mid \widehat{A}_0\mid$.
\qed
\end{proof}

\begin{remark}\label{rem:isopart2}
It is easily checked that the function $N(\mathbb{A}_{p_{01}}) $, counting the number of involutive bisemilattices obtained over $\mathbb{A}_{p_{01}} $, can be generalized to the family $\mathbb{A}_{p_{_{0m}}} $ of semilattice direct systems of Boolean algebras $\pair{\{A_{0},\dots, A_{m}\}, p_{i-1,i}, I}$. More precisely, 
\[
N(\mathbb{A}_{p_{_{0m}}}) = N(\mathbb{A}_{p_{_{01}}}) \cdot N(\mathbb{A}_{p_{_{12}}}) \cdot_{\dots} \cdot N(\mathbb{A}_{p_{_{m-1 m}}}) = \prod_{i=0}^{m-1} N(\mathbb{A}_{p_{_{ii+1}}}).
\]
\begin{figure}[h!]
\begin{center}\leavevmode
\xymatrix@R=0.7pc@C=0.7pc{
*={\bullet}\ar@{..}[d] 	&			&				&\\
*={\bullet}\ar@{..}[d] 	&			&				&\\	
*={\bullet}\ar@{..}[d] 	&			&				&*{\bullet}\ar@{-}[d]\ar@{..}[dl]\\
*{\bullet}\ar@{..}[d] 	&			&*{\bullet}\ar@{-}[d]\ar@{..}[dl]	&*={\bullet}\\
*{\bullet}\ar@{..}[d]   &*{\bullet}\ar@{-}[d]	&*={\bullet}\ar@{..}[ur]&\\
*={\bullet}		&*={\bullet}\ar@{..}[ur]&			&\\
	}
\xymatrix@R=0.7pc@C=0.7pc{
	&	&	&&*{\bullet}\ar@{..}[dl]		&			&			\\
	&	&	&*{\bullet}\ar@{..}[dl]			&		&	&*={\bullet}\ar@{..}[dl]\\	
	&	&*{\bullet}\ar@{..}[dl]	&			&			&*={\bullet}\ar@{..}[dl]\\
	&*{\bullet}\ar@{..}[dl]	&	&			&*{\bullet}\ar@{-}[d]\ar@{..}[dl]&&\\
*{\bullet}\ar@{-}[d]	&	&	&*{\bullet}\ar@{-}[d]	&*={\bullet}\ar@{..}[dl]	&&\\
*={\bullet}	&	&		&*={\bullet}		&			&&\\
	}
\xymatrix@R=0.7pc@C=0.7pc{
	&&					&			&	&			&\\
	&		&			&*={\bullet}		&	&			&	\\
	&		&*={\bullet}\ar@{..}[ur]&			&	&*={\bullet}\ar@/_0.5pc/@{..}[ddll]	&		\\
	&*={\bullet}\ar@{..}[ur]&		&			&*={\bullet}\ar@{-}[ur]&	&*={\bullet}\ar@{-}[ul]\\
*={\bullet}\ar@{-}[ur]&	&*={\bullet}\ar@{-}[ul]	&*={\bullet}		&	&*={\bullet}\ar@{-}[ur]\ar@{-}[ul]&\\
	&*={\bullet}\ar@{-}[ur]\ar@{-}[ul]&	&*={\bullet}\ar@{-}[u]\ar@{..}[urr]&	&	&\\
	}
\xymatrix@R=0.7pc@C=0.7pc{
			&			&			&\\	%		&			&			&\\
			&			&*={\bullet}\ar@{..}[dddll]&\\	%		&		&*={\bullet}\ar@/^0.5pc/@{..}[ddd]&\\
			&			&*={\bullet}\ar@{-}[u]\ar@{..}[dd]& \\	%	&			&*={\bullet}\ar@{-}[u]	&\\
			&*={\bullet}\ar@{..}[uur]&			&\\		%	&*={\bullet}\ar@/^0.5pc/@{..}[uur]&		&\\
*={\bullet}\ar@{-}[ur]	&			&*={\bullet}\ar@{-}[ul]	&\\%*={\bullet}\ar@{-}[ur]\ar@/_0.5pc/@{..}[uurr]&	&*={\bullet}\ar@{-}[ul]	&\\
			&*={\bullet}\ar@{-}[ur]\ar@{-}[ul]\ar@{..}[uuur]&% &			&*={\bullet}\ar@{-}[ur]\ar@{-}[ul]\ar@{..}[uuur]&&
	}
\end{center}
\caption{The linearly ordered non-isomorphic IBSLs of cardinality 6.}
\label{fig:finibsl}
\end{figure}
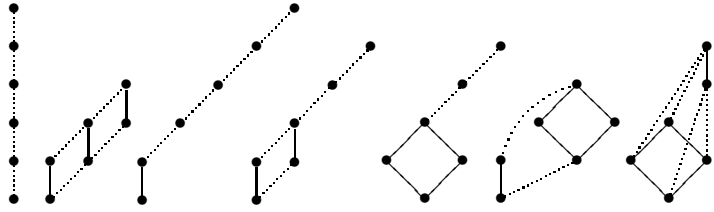
\end{remark}

\section{Generating and counting $\LIBSL$}\label{sec:algos}

The previous section provides sufficient and necessary conditions in order to identify the non-isomorphic \LIBSL obtained over direct systems sharing both the index set and the family of Boolean algebras. In general, the elements of such a family of direct systems  $\mathbb{A}_{p}$  differ at most with respect to  the definition of their homomorphisms. It is easy to check that, given $\mathbb{X}=\langle \A_{i},p_{ii^{\prime}}, I\rangle$ and $\PL(\mathbb{X})\in\LIBSL$, we can always single out the family of direct systems $\mathbb{A}_{p}=\{\langle \A_{i},p_{i-1,i}, I\rangle\}$. As this fact is central in the rest of the paper, we introduce the following definition

% Clearly, if $\mathbb{X}\mathbb{X}^{\prime},\in\mathbb{A}_{p}$, then the \LIBSL $\PL(\mathbb{X})\in\LIBSL$ and $\PL(\mathbb{X})$

\begin{definition}\label{def shape}
Let $\mathbb{X}= \pair{\A_{i}, p_{ii'}, I} $ be a semilattice direct system of Boolean algebras and $\B\cong\PL(\mathbb{X})\in\LIBSL$. We define the \emph{shape} of $\B$ as $\mathbb{X}_{\B}=\pair{\A_{i}, I}$.

\end{definition}

Clearly, two $\LIBSL$, $\A\cong\PL(\mathbb{X}), \B\cong\PL(\mathbb{Y})$  have the same shape if and only if  the  semilattice direct systems $\mathbb{X}$ and $\mathbb{Y}$ differ at most with respect to their homomorphisms, i.e. they belong to the same family $\mathbb{A}_{p}$. So, with this terminology at hand, Lemma \ref{lem: conteggio hom con isofilter} and Remark \ref{rem:isopart2} tell us the number of non-isomorphic $\LIBSL$ of a fixed shape. Moreover, as a consequence of Lemma \ref{lem:iso1}, two \LIBSL with different shapes are non-isomorphic. 

Therefore, in order to answer our  question concerning the number of finite algebras in $\LIBSL$ it only remains to count, for a given $n\in\mathbb{N}$, the number of shapes that an $\LIBSL$ of order $n$ can have. The present section is devoted to this issue.

Let $n\in\mathbb{N}^+$, and $e, m_i\in \mathbb{N}$ for $0\leq i\leq e$. 
A \emph{binary partition} of $n$ is a decomposition of $n$ into powers of two, that is
\begin{equation}\label{eq:2dec}
n = m_{e}\cdot 2^{e}+m_{e-1}\cdot 2^{e-1} +\dots+ m_{0}\cdot 2^{0}.
\end{equation}
Hence, the number $b(n)$ of binary partitions\footnote{See sequence http://oeis.org/A018819
at \emph{The On-Line Encyclopedia of Integer Sequences}, published electronically at https://oeis.org.}
of $n$ is the number of solutions of \eqref{eq:2dec}.
In general, binary partitions which differs by the orders of summands are considered identical.

Knowing that each finite linearly ordered \IBSL $\B$ can be decomposed as a P{\l}onka sum of Boolean algebras $\mathbf{A}_i$ whose direct system is indexed by a totally ordered set $I$, it follows that the cardinality $n$ of $\B$ is always given by a solution of a binary partition \eqref{eq:2dec} where $2^i=|\mathbf{A}_i|$ and $|I|=\sum_{i=0}^e m_i$.

The fact that binary partitions cannot differ only for the order of summands, together with (iii) in Definition~\ref{def: direct system} and Remark~\ref{rem:isopart2}, implies  that $b(n)$ cannot account for the number of shapes that an $\LIBSL$ of order $n$ can assume. Indeed, given a certain shape $\langle \A_{i}, I\rangle$ of $\B\in\LIBSL$, every permutation $\phi$ over $I$ that moves at least two indexes $i,j$ such that $\mid\A_{i}\mid \ \neq  1, \mid\A_{j}\mid \ \neq  1$ defines a new shape $\langle \A_{i}, \phi(I)\rangle$.  Notice that the condition about $\mid\A_{i}\mid \ \neq 1, \mid\A_{j}\mid \ \neq 1$ is justified by the fact  that for any non-trivial Boolean algebra $\A$ there are no homomorphisms from a trivial Boolean algebra to $\A$. Hence, we can remove from the permutations $\phi(I)$ of $I$ all the algebras whose cardinality is $2^0$. The number of such algebras is expressed in \eqref{eq:2dec} as $m_0$. 
Now, looking at $\langle \A_{i}, I\rangle$ and $\langle \A_{i}, \phi(I)\rangle$ as binary partitions, it is immediate to observe that they only differ for the order of summands.

Define $I^+=I\setminus\{0\}$. For these reasons, we have to consider the \emph{permutations with repetitions} of 
$|I |-m_{0}=\sum_{i\in I^+} m_{i}$,
knowing that each $\mathbf{A}_i$ is repeated $m_i$ times.
The number of such permutations is given by the \emph{multinomial coefficient} \cite{GKP89}
\begin{equation}\label{eq:permrep}
pr(|I |-m_{0})=   {{|I |-m_{0}}\choose{ \{m_i\}_{i\in I^+}} }  = {\frac{(\sum_{i\in I^+} m_{i})!}{ \prod_{i\in I^+} m_{i}!}},
\end{equation}

We now start by introducing a routine procedure that generates all the binary partitions of $n$
in a form that makes the ensuing computations easy to handle.

\begin{definition}
A \emph{sequence} is a list of pairs of the form $s=(m_{e_{1}},2^{e_{1}})\to\dots\to(m_{e_{p}},2^{e_{p}})$.
We define its \emph{presentation} as
$$P(s) =  (2_1^{e_{1}}\to\dots\to2_{m_{e_{1}}}^{e_{1}})\to\dots\to(2_{1}^{e_{p}}\to\dots\to 2^{e_{p}}_{m_{e_{p}}}) .$$
\end{definition}

Given a set $S$ of sequences, we denote by $P(S)$ the set of the presentations of sequences in $S$,
that is $P(S)=\{P(s)\mid s\in S\}$.

\begin{definition}\label{def:seqpart}
Given a natural number $n\in\mathbb{N}^+$ we define 
$$L(n)=\{(m_{e_1},2^{e_1})\to(m_{e_2},2^{e_2})\to\dots\to(m_{e_k},2^{e_k})\mid n=\sum_{i=1}^k (m_{e_i}\cdot 2^{e_i})\},$$
such that $e^1>e^2>\dots>e^k$, 
that is the set of sequences that give all decompositions of $n$ into powers of two, such as in \eqref{eq:2dec}.

For any sequence $l(n)\in L(n)$, define 
$F(l(n))=\{m_{e_i}\mid (m_{e_i},2^{e_i})\in l(n)\}$ as the set of the \emph{multiplicities} in $l(n)$,
and $F^{+}(l(n))=\{m_{e_i}\in F(l(n))\mid e_i\not=0\}$ 
as the subset of  $F(l(n))$ given by the \emph{positive multiplicities} in $l(n)$.
\end{definition}

Observe that the above definition of multiplicity  and positive multiplicity in $l(n)$ can be equally defined by looking at the presentation $P(l(n))$. Moreover, given a sequence $l(n)=(m_{e_{1}},2^{e_{1}})\to\dots\to(m_{e_{p}},2^{e_{p}})$ with positive factors $m_{e_{1}},...,m_{e_{z}}$, we denote by $P^{+}(l(n))$ the presentation of the sequence $(m_{e_{1}}, 2^{e_{1}})\to...\to(m_{e_{z}}, 2^{e_{z}})$.

\begin{definition}\label{def:division map}
Let $n, n_{1},n_{2}\in\mathbb{N} $ and $ E(n)=\{ 2^e,\dots,2^0\}$ the set of powers of two such that $2^i \leq \log_{2} n$, for any $i= 0,\dots, e$. The map $d\colon\mathbb{N} \times\mathbb{N} \to E(n)$  defined as 
\[
d(n_{1},n_{2})\coloneqq \left\{ \begin{array}{ll}
max\{m\in E(n):m<n_{1},n_{2}\}& \text{ if  $n_{1}, n_{2}\not\in\{0,1\}$}\\
 1 & \text{otherwise.}
  \end{array} \right.  
\]
is called the \emph{division map} of $(n_{1},n_{2})$ with respect to $n$.
\end{definition}

By a forest we mean a disjoint union $\sqcup$ of trees.

Given two sequences $l(n)=(m_{e_1},2^{e_1})\to\dots\to(m_{e_k},2^{e_k})$, 
$l'(n)=(m_{f_1},2^{f_1})\to\dots\to(m_{f_h},2^{f_h})$ 
in $L(n)$, we say that they share a \emph{common prefix} when
for some $i\leq min(k,h)$, we have $e_j= f_j$, for $1\leq j\leq i$.
We write $l(n)=P\to s$ and $l'(n)=P\to s'$
to denote the fact that $P$ is the common prefix of $l(n)$ and $l'(n)$.

Given a set of sequences $S$, we construct a forest $\Gamma(S)$ in the following way.
Let $l$ and $l'$ be two chains in $S$, such that $P$ is their longest common prefix,
that is $l=P\to s$ and $l'=P\to s'$.
Then, the tree $P\to(s\sqcup s')$ belongs to $\Gamma(S)$.
For each sequence $s$ in $S$ there is a unique branch of $\Gamma(S)$ that is a unique copy of $s$,
and every branch of $\Gamma(S)$ is a copy of a unique chain in $S$.

The pseudocode in Algorithm~\ref{alg:gen} introduces a couple of functions that, for any given $n\in\mathbb{N}^+$, 
produce the set $\Gamma(L(n))$, that is the set of trees whose branches are binary partitions of $n$ 
expressed as sequences of Definition~\ref{def:seqpart}.
Notice that, to better follow the construction of sequences of $\Gamma(L(n))$,
in the pseudocode of Algorithm~\ref{alg:gen} we repeatedly use expressions like $2^e$.
Obviously, a real-world implementation of the algorithm does not need such a level of detail,
and exponents $e$ can be used instead.

\begin{algorithm}%[H]
\caption{}\label{alg:gen}
\begin{algorithmic}[1]
\Function{GenForest}{$n$}
	\State $e=\lfloor \log_2 n \rfloor$
	\State $E= \{2^{e},2^{e-1},\dots, 2^0 \}$ \label{line:expset}
	\State $F$ empty list of trees
	\ForEach{$2^e$ in $E$} %\Comment{$l=$\texttt{[root[$l$]]} for some $l\in F$}
		\State $T_e= $ \Call{GenSeq}{$n,2^e$}
		\Comment{$T_e$ is a list of trees.}% whose roots have values $(n',2^e)$ for $n'\in\{n,n-1,n-2,\dots,\}$.}
	%	\State $T=$\Call{createTree}{$L$,$P$}
		\State add $T_e$ to $F$
	\EndFor
	\State \textbf{return} $F$
\EndFunction
\Statex

\Function{GenSeq}{$n$,$2^e$} 
	\If{$e==0$}
		\State\textbf{return} a tree with root $(n,2^0)$ \label{line:zero}
	\EndIf
	\State $q= n/2^e$ 
	\If{$q>1$}
		\ForEach{$i\in\{1,\dots,q\}$} 
			\State create a tree $P_i$ with root $(i, 2^e)$	\label{line:start}
			\State $m=n-i\cdot 2^e$
			\If{ $m>0$}
			\If{$m=2^x$ and $x<e$} 
				\State $d_i=m$
			\Else
				\State $d_i = d(m, 2^e)$
				\Comment{$d$ refers to the division map of Definition~\ref{def:division map}}
			\EndIf
				\ForEach{$2^j$ in $\{2^0,\dots, d_i\}$}
					\State $T= $ \Call{GenSeq}{$n-i\cdot 2^e, 2^j$}	
					\Comment{$T$ is a list of trees.}
					\State for every $t\in T$, add $t$ as a child of $P_i$
				\EndFor
			\EndIf
		\EndFor\label{line:end}
	\EndIf
	\If{$q==1$}
		\State create a tree $P_i$ with root $(1,2^e)$ \label{line:resto}
		\State $r= n~\textbf{mod}~2^e$
		\If{$r>0$}
			\State $T= $ \Call{GenForest}{r}
				\Comment{$T$ is a list of trees.}
			\State for every $t\in T$, add $t$ as a child of $P_i$

		\EndIf
	\EndIf
	\State \textbf{return} $P_i$
\EndFunction
%\Statex
\end{algorithmic}
\end{algorithm}

\begin{theorem}\label{th: algoritmo completo}
$\Gamma(L(n))= \ $\textsc{GenForest}$(n)$.
\end{theorem}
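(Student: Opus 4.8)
The plan is to prove the identity by comparing the two forests branch by branch, exploiting that $\Gamma(L(n))$ is, by its very construction, the \emph{prefix tree} (trie) of $L(n)$: each sequence of $L(n)$ appears as exactly one branch, and branches are glued along their longest common prefixes. Hence it suffices to establish two things: (i) the set of branches of \textsc{GenForest}$(n)$ equals $L(n)$, and (ii) \textsc{GenForest}$(n)$ is itself a trie, i.e. the children of every node carry pairwise distinct labels, so common prefixes are shared rather than duplicated. Granting (i) and (ii), both $\Gamma(L(n))$ and \textsc{GenForest}$(n)$ are \emph{the} trie of $L(n)$, and the theorem follows.

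The core is (i), which I would prove by strong induction on $n$, with engine the following claim on the auxiliary function: for every $n$ and every admissible exponent $e$, the branches of \textsc{GenSeq}$(n,2^e)$ are exactly the sequences of $L(n)$ whose largest (equivalently, first) power is $2^e$. Since \textsc{GenForest}$(n)$ returns the union of \textsc{GenSeq}$(n,2^e)$ over all $2^e\in E$, and every sequence of $L(n)$ has a unique largest power, statement (i) is obtained from this claim by union over $e$. Within the induction step for a fixed $n$, I prove the \textsc{GenSeq} claim first: all recursive calls inside \textsc{GenSeq}$(n,2^e)$, namely \textsc{GenSeq}$(n-i\cdot 2^e,2^j)$ and \textsc{GenForest}$(n\bmod 2^e)$, have arguments strictly smaller than $n$, so the inductive hypothesis applies, and then the \textsc{GenForest} statement at $n$ follows by the union just described. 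The base case $e=0$ is immediate, since \textsc{GenSeq}$(n,2^0)$ returns the single branch $(n,2^0)$, the unique decomposition of $n$ with largest power $2^0$. For $e\geq 1$ I would split on $q=\lfloor n/2^e\rfloor$ as the pseudocode does. When $q=1$ one has $n-2^e=n\bmod 2^e<2^e$, so every power occurring in $L(n\bmod 2^e)$ is automatically below $2^e$; prefixing $(1,2^e)$ to those branches yields precisely the sequences of $L(n)$ with largest power $2^e$, each with a single copy (as $q=1$ forbids two).

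The delicate point, and the step I expect to be the main obstacle, is the case $q>1$: I must verify that the inner loop over $2^j\in\{2^0,\dots,d_i\}$ enumerates exactly the admissible leading powers of the remainder $m=n-i\cdot 2^e$, i.e. those $2^j$ that may serve as the largest power of a decomposition of $m$ subject to the constraint $2^j<2^e$ forced by the strictly decreasing exponents of a sequence in $L(n)$. Concretely, I would show that this admissible set is $\{2^j: 2^j\leq m \text{ and } 2^j<2^e\}$, that it is downward closed among powers of two, and that $d_i$ equals its largest element. The subtlety is precisely the case in which $m$ is itself a power $2^x$ with $x<e$: here the defining strict inequality $2^j<m$ in the division map would wrongly exclude the admissible value $2^j=m$, which is exactly why the pseudocode overrides $d_i$ to $m$ in that case; when $m$ is not such a power, the plain division map already returns the correct maximum. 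Once this is in place, the inductive hypothesis applied to each \textsc{GenSeq}$(m,2^j)$ gives the decompositions of $m$ with largest power $2^j$, and their union over $2^j\leq d_i$ is the full set of decompositions of $m$ using only powers below $2^e$; prefixing $(i,2^e)$ and taking the union over $1\leq i\leq q$ yields exactly the sequences of $L(n)$ with largest power $2^e$, closing the induction.

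Finally, for (ii) I would read off from the pseudocode that no prefix is ever duplicated: each value of $i$ produces a single node $P_i$ to which all relevant subtrees are attached as children; the children attached in one run of the inner loop carry pairwise distinct exponents (one per $2^j$), while those returned by a single \textsc{GenSeq}$(m,2^j)$ carry pairwise distinct multiplicities, and the same holds recursively through the \textsc{GenForest}$(r)$ call. Thus every node of \textsc{GenForest}$(n)$ has children with pairwise distinct labels, so the output is a trie. Combined with (i), this identifies \textsc{GenForest}$(n)$ with the trie of $L(n)$, which is $\Gamma(L(n))$, proving the theorem.
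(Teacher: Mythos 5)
Your proposal is correct, and it is organized around a genuinely different pair of lemmas than the paper's proof. The paper splits the equality into two inclusions: it dismisses $\supseteq$ (soundness) as following ``by direct inspection'', and proves $\subseteq$ (completeness) by induction on $n$, peeling off the head $(m_{1},2^{e_{1}})$ of a given sequence $l(n+1)$, verifying through a case analysis ($m_{1}=1$ vs.\ $m_{1}>1$, then $m_{2}=1$ vs.\ $m_{2}>1$, with the subcase where the remainder $k$ is itself a power of two strictly below $2^{e_{1}}$) that the first two pairs appear in the correct parent--child position, and invoking the induction hypothesis on the tail. You instead prove a single sharper invariant for the auxiliary function --- the branches of \textsc{GenSeq}$(n,2^{e})$ are \emph{exactly} the sequences of $L(n)$ with leading power $2^{e}$ --- which yields both inclusions at once after taking the union over $e$; and your analysis of the admissible leading powers of the remainder $m$, in particular why the division map must be overridden to $d_{i}=m$ precisely when $m=2^{x}$ with $x<e$ (the strict inequality in Definition~\ref{def:division map} would otherwise wrongly exclude $2^{j}=m$), is the same combinatorial heart as the paper's case (1)(a). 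Your version buys two things beyond the paper's argument. First, the soundness direction is actually proved rather than asserted. Second, and more substantively, your point (ii) --- every node of \textsc{GenForest}$(n)$ has children with pairwise distinct labels, so the output is the trie of its branch set --- addresses a step the paper silently skips: since $\Gamma(L(n))$ is defined by gluing sequences along longest common prefixes, equality of \emph{forests} requires not only that the branch sets coincide but that the algorithm performs the same maximal gluing, and nothing in the paper's branchwise induction certifies this; without (ii) one only obtains equality of branch sets. So your route is a refinement that makes the theorem true in the form stated, at the modest cost of a longer setup.
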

\begin{proof}
$\supseteq$. This inclusion follows by direct inspection.

$\subseteq$. We prove this inclusion by an induction on $n$.

(B). $n=1$. If $n=1$ then $\Gamma(l(n))=\{(1,2^{0})\}$. As $E(1)=\{2^{0}\}$ by Line 6 the algorithm calculates \textsc{GenSeq}$(1,2^{0})$
 which by Line 13 has as output exactly the sequence $\{(1,2^{0})\}$.

(IND). Assume the statement holds for any $\{1,\dots, n\}$ and consider a sequence 
$l(n+1)=(m_{1},2^{e_{1}})\to...\to(m_{p},2^{e_{p}})\in\Gamma(l(n+1))$. 
Clearly $2^{e_{1}}\in E(n+1)$, then in \textsc{GenSeq}$(n+1,2^{e_{1}})$
at Lines 15 we obtain $(n+1)/2^{e_{1}}=q$. 
We have two cases (a) $m_{1}=1$ or (b) $m_{1}\gneq 1$. 
In the first case (a), Lines 17-18 entails that $(1,2^{e_{1}})$ is generated as root of the sequence in case $q>1$. 
Similarly, when  $q=1$, Lines 34-35 generate $(1,2^{e_{1}})$ as a root of the sequence.
For the second case (b), we have $m_{1}\in\{1,...,q\}$
and by Line 18  the algorithm gives $(m_{1},2^{e_{1}})$ as a root of the sequence.

Now, clearly $l(n+1)\smallsetminus(m_{1},2^{e_{1}})\in\Gamma(l((n+1)-m_{1}\cdot 2^{e_{1}}))$, 
i.e. $l(n+1)\smallsetminus(m_{1},2^{e_{1}})$ is a sequence for $(n+1)-m_{1}\cdot 2^{e_{1}}$. 
By induction hypothesis $l(n+1)\smallsetminus(m_{1},2^{e_{1}})\in$\textsc{GenForest}$((n+1)-m_{1}\cdot 2^{e_{1}})$. 
To complete the proof we have to show that the second pair $(m_{2},2^{e_{2}})$ in $l(n+1)$ is generated 
as child of the root $(m_{1},2^{e_{1}})$ is some branch.
In order to simplify the notation we fix $k=(n+1)-m_{1}\cdot 2^{e_{1}}$.

We distinguish two cases:

(1). $m_{2}=1$. Then either (a) $2^{e_{2}}=k$ or (b) $2^{e_{2}}\in\{2^{0},...,d(2^{1},k)\}$. 
In the first case (a) we have that $k$ is a power of $2$ strictly smaller than $2^{1}$, and  
by Line 21,25,26,27, follows that the computation of \textsc{GenSeq}$(k, 2^{0})$ 
returns $(1, k)$ as child of $(m_{1}, 2^{e_{1}})$ (by Lines 12-13). 
In the second case (b), by Lines 26,27 the algoritm computes \textsc{GenSeq}$(2^{e_{2}}, k)$. 
So, it is immediate to verify that for any $k/2^{e_{2}}$ computed at Line 15, 
by Lines 17,18 and 33,34 we obtain $(1,2^{e_{2}})$ as child of $(m_{1},2^{e_{1}})$.

(2). $m_{2}>1$. This implies $2^{e_{2}}<2^{e_{2}}$ and therefore $2^{e_{2}}\in\{2^{0},...,d(k,2^{e_{1}})\}$. 
By Line 26 we have the computation of \textsc{GenSeq}$(k,2^{e_{2}})$. 
Clearly $m_{2}\leq k/2^{e_{2}}$ and this, 
together with the assumption $m_{2}>1$ implies $k/2^{e_{2}}>1$. 
So, by Lines 17,18 $(m_{2},2^{e_{2}})$ is a child of $(m_{1},2^{e_{1}})$.

So, as $(m_{2},2^{e_{2}})$ is the root of the sequence $l(n+1)\smallsetminus(m_{1},2^{e_{1}})$ 
and by induction hypothesis  $l(n+1)\smallsetminus(m_{1},2^{e_{1}})$ is generated by the algorithm, 
the fact that $(m_{2},2^{e_{2}})$ is generated as child of $(m_{1},2^{e_{1}})$ entails that $l(n+1)$ 
is generated by the algorithm, as desired. \qed
\end{proof}

\begin{example}\label{exa:ex10}
For $n=10$ the output of \textsc{GenForest}$(10)$ is depicted in Figure~\ref{fig:gentree10}.
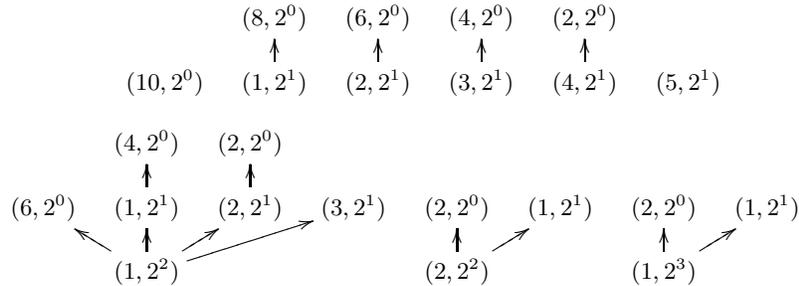
\begin{figure}[h!]
\begin{center}\leavevmode
\xymatrix@R=0.7pc@C=0.7pc{
	&		&			&		&			&		\\
	&(8,2^0)	&(6,2^0)		&(4,2^0)	&(2,2^0)		&		\\
(10,2^0)&(1,2^1)\ar@{->}[u]&(2,2^1)\ar@{->}[u]&(3,2^1)\ar@{->}[u]&(4,2^1)\ar@{->}[u]	&(5,2^1)
	}

\medskip

\xymatrix@R=0.7pc@C=0.7pc{
	&	&(4,2^0)						&(2,2^0)		&	&		& & & &\\
 	&(6,2^0)&(1,2^1)\ar@{->}[u]					&(2,2^1)\ar@{->}[u]	&(3,2^1)&(2,2^0)	&(1,2^1)		&(2,2^0)		&(1,2^1)	&\\
	&	&(1,2^2)\ar@{->}[u]\ar@{->}[ul]\ar@{->}[ur]\ar@{->}[urr]&			&	&(2,2^2)\ar@{->}[u]\ar@{->}[ur]	&&(1,2^3)\ar@{->}[u]\ar@{->}[ur]	&&	
}
%\xymatrix@R=0.7pc@C=0.7pc{
%		&			&			&		&\\		
%(2,2^0)		&(1,2^1)		&(2,2^0)		&(1,2^1)	&\\
%(2,2^2)\ar@{->}[u]&(2,2^2)\ar@{->}[u]	&(1,2^3)\ar@{->}[u]	&(1,2^3)\ar@{->}[u]&	
%}
\end{center}
\caption{The forest of sequences generated by \textsc{GenForest}$(10)$, see Example~\ref{exa:ex10}.}
\label{fig:gentree10}
\end{figure}
\end{example}

\begin{remark}\label{L-ibsl is  sequence}

It is worth noticing that, given a sequence $s=(m_{e_{1}},2^{e_{1}})\to\dots\to(m_{e_{p}},2^{e_{p}})$  
such that $e_{i}\neq 0$ for any $e_{1}\leq e_{i}<e_{p}$,  
its presentation $P(s) =  (2_1^{e_{1}}\to\dots\to2_{m_{e_{1}}}^{e_{1}})\to\dots\to(2_{1}^{e_{p}}\to\dots\to 2^{e_{p}}_{m_{e_{p}}})$ 
always describe a shape  $\mathbb{X}_{\B}$ of an \LIBSL $\B$. 
More precisely, $\mathbb{X}_{\B}= \pair{\A_{i}, I}$ 
where  $I=\{1_{e_{1}},\dots, m_{e_{1}}\}\cup\{1_{e_{2}},\dots, m_{e_{2}}\}\cup\dots\cup\{1_{e_{p}},\dots, m_{e_{p}}\}$ 
%is a chain ordered lexicographically,
and for each $k_{e_{i}}\in I$, $\A_{k_{e_{i}}}$ is a Boolean Algebra of order $2^{e_{i}}$. 
%Symmetrically, 
Dually,
given an \LIBSL $\B\cong\PL(\mathbb{X})$, its shape $\mathbb{X}_{\B}$ 
can always be described by the presentation of an appropriate sequence. 
\end{remark}

\noindent
\textbf{Notation}. In what follows we adopt the following notation. Given $l(n)\in L(n)$, we denote by $\pi(l(n))=\{s_{1},\dots,s_{c(l(n))}\}$ the set of sequences obtained by a permutation with repetition over  $P^{+}(l(n))$.

\begin{example}
Let $l'(10)=(1,2^2)\to(2,2^1)\to(2,2^0)$ be a sequence in $L(10)$
(compare with the branches in the biggest tree in Figure~\ref{fig:gentree10}).
The only permutation in $P^{+}(l'(10))$ is $(2,2^1)\to(1,2^2)\to(2,2^0)$.
\end{example}

 \begin{theorem}\label{teorema principale}
Let $n\in\mathbb{N}^+$, $l(n)=(m_{e_{1}}, 2^{e_{1}})\to...\to(m_{e_{q}},2^{e_{q}})\in L(n)$ with  $k=m_{e_{1}}+...+ m_{e_{q}}$
and let also $m_{e_{1}},...,m_{e_{z}}$ be the members of $F^{+}(l(n))$. Then,  there are
 \[c(l(n))=\frac{(m_{e_{1}}+...+m_{e_{z}})!}{m_{e_{1}}!\cdot...\cdot m_{e_{z}}!}\]
shapes $\langle {I_{1}, \A_{i}^{1}\rangle,\dots, \langle I_{c(l(n))},\A_{i}^{c(l(n))}}\rangle$ of finite \LIBSL  of order $n$ 
such that, for each $1\leq j\leq c(l(n))$, $\mid I_{j}\mid=k$ and $\A_{i}^{j}$ is a family of Boolean Algebras whose cardinalities correspond to the members of $P(l(n))$. 
 
 \end{theorem}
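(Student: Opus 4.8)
The plan is to reduce the statement to the count of permutations with repetition already recorded in \eqref{eq:permrep}, by way of the correspondence between shapes and orderings of the Boolean components along the linear index set established in Remark~\ref{L-ibsl is  sequence}. First I would fix the data supplied by $l(n)$: it prescribes a multiset of Boolean algebras, namely $m_{e_i}$ copies of a Boolean algebra of cardinality $2^{e_i}$ for each $i$, so that a shape $\langle \A_i, I\rangle$ of an $\LIBSL$ of order $n$ realizing $l(n)$ is exactly an assignment of this multiset to the $k=m_{e_1}+\dots+m_{e_q}$ positions of a linearly ordered index set $I$. By Remark~\ref{L-ibsl is  sequence}, every assignment that extends to a genuine semilattice direct system (condition (iii) of Definition~\ref{def: direct system}) describes a shape, and conversely every such shape arises this way.

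The core of the argument is to pin down which assignments are admissible and when two of them yield the same shape. Here I would invoke the observation already made before the statement: since there is no Boolean homomorphism from a trivial algebra into a non-trivial one, a trivial component (cardinality $2^0$) can never sit below a non-trivial one in the linear order. Hence all $m_0$ trivial algebras are forced onto the maximal positions of $I$, and permuting them among themselves produces no new shape, since they are pairwise isomorphic and their mutual transition maps are forced. Thus a shape is determined precisely by the ordering of the \emph{non-trivial} components, whose multiplicities are exactly the members $m_{e_1},\dots,m_{e_z}$ of $F^{+}(l(n))$.

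Next I would verify admissibility, i.e. that any linear ordering of the non-trivial components (with the trivial ones stacked on top) actually occurs. Using finite Stone duality (Theorem~\ref{th:finibsldual}), a transition map $\A_i\to\A_{i'}$ corresponds to a function $\widehat{A}_{i'}\to\widehat{A}_i$ between the finite, non-empty atom sets, which always exists; choosing such maps on consecutive indices, composing them, and adjoining the unique maps into the trivial top segment yields a direct system satisfying condition (iii). Consequently every ordering of the multiset of non-trivial algebras is realized by a shape. Combining this with the isomorphism criterion of Lemma~\ref{lem:iso1} — which on a linearly ordered index set forces the identity on indices and requires isomorphic Boolean factors at each position — two orderings give isomorphic shapes exactly when they coincide as ordered sequences of cardinalities, and distinct orderings give non-isomorphic shapes.

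Finally the count is immediate: the number of distinct orderings of a multiset whose elements have multiplicities $m_{e_1},\dots,m_{e_z}$ is the multinomial coefficient of \eqref{eq:permrep}, so that
$$c(l(n))=\frac{(m_{e_1}+\dots+m_{e_z})!}{m_{e_1}!\cdots m_{e_z}!},$$
with $|I_j|=k$ and the Boolean factors of each shape having cardinalities matching the members of $P(l(n))$. The step I expect to require the most care is the precise equivalence between ``shape'' and ``ordered multiset of non-trivial components'': one must argue that the forced placement of the trivial algebras genuinely removes them from the enumeration, and that two same-cardinality components contribute a true quotient by $m_{e_i}!$ rather than being distinguishable — which is exactly the point at which Lemma~\ref{lem:iso1} and the fact that finite Boolean algebras of equal size are isomorphic are brought to bear.
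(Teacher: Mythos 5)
Your proposal is correct and follows essentially the same route as the paper: it sets up the bijection between shapes and permutations with repetition over $P^{+}(l(n))$ via Remark~\ref{L-ibsl is  sequence}, discards the trivial components using the non-existence of homomorphisms from a trivial Boolean algebra into a non-trivial one, and concludes with the multinomial count of \eqref{eq:permrep}. Your explicit verification of admissibility via Stone duality (existence of functions between non-empty finite atom sets) is a useful elaboration of a point the paper leaves implicit in the Remark, but it does not change the structure of the argument.
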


\begin{proof}

Observe that the above formula 
is an instance of \eqref{eq:permrep}, %counts 
counting the permutations with repetitions of a set of cardinality $m_{e_{1}}+...+m_{e_{z}}$ 
knowing that each $1\leq i\leq z$ object is repeated $m_{e_{i}}$ times.
%Now  given $l(n)\in L(n)$, consider $\pi(l(n))$ the set of sequences obtained by a permutation with repetition over  $P^{+}(l(n))$. 
By Remark \ref{L-ibsl is  sequence}, each $s\in\pi(l(n))$ describes a shape of an $\LIBSL$ of cardinality $n$. 

Now consider  $s,s^{\prime}\in\pi(l(n))$, with $s\neq s^{\prime}$ and let $\leq_{s},\leq_{s^{\prime}}$ be the  order of their presentations. The fact that $s\neq s^{\prime}$ implies that for at least two elements $2_{x}^{e_{i}},2_{x^{\prime}}^{e_{i}^{\prime}}\in P^{+}(s),P^{+}(s^{\prime})$  (with $e_{i}\neq e_{i}^{\prime}$) it holds 
$2_{x}^{e_{i}}\leq_{s}2_{x^{\prime}}^{e_{i}^{\prime}}\iff2_{x^{\prime}}^{e_{i}^{\prime}}\leq_{s^{\prime}}2_{x}^{e_{i}}.
$
 This, by applying Remark \ref{L-ibsl is  sequence}, proves that the shapes described by $P(s), P(s^{\prime})$ are different. 
Finally, consider  $\mathbb{X}_{\B}=\langle\A_{i}, I\rangle$ a shape of an $\LIBSL$ $\B\cong\PL(\mathbb{X})$ with cardinality $n$  such that  $\mid I\mid=k$ and such that $\A_{i}$ is a family of Boolean algebras whose cardinalities are exactly the members of $P(l(n))$. By Remark \ref{L-ibsl is  sequence}, $\mathbb{X}_{\B}$ can be described by the presentation $P(r)$ of an appropriate sequence $r$. Moreover, the fact that the cardinalities of the algebras in the family $\A_{i}$ are all and only the members of $P(l(n))$, implies that $r$ can be obtained from $l(n)$ by permuting at least one element $2_{x}^{e_{i}}\in P^{+}(l(n))$ with another $2_{x^{\prime}}^{e_{i}^{\prime}}\in P^{+}(l(n))$ such that $e_{i}\neq e_{i}^{\prime}$. By construction, this implies $r\in\pi(l(n))$, as desired.
\qed
\end{proof}

\begin{corollary}
Let $l_{1}(n),...,l_{p}(n)$ be the elements of $L(n)$. Then the number of shapes of \LIBSL of order $n$ is equal to 
$\sum_{i=1}^{p}c(l_{i}(n)).$
\end{corollary}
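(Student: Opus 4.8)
The plan is to view the collection of all shapes of order $n$ as partitioned into finitely many classes, one for each sequence in $L(n)$, and then to read off the size of each class directly from Theorem~\ref{teorema principale}, so that the corollary becomes a plain additivity statement.

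First I would record the elementary fact that drives the counting. If $\mathbb{X}_{\B}=\langle\A_{i},I\rangle$ is the shape of an \LIBSL $\B$ of order $n$, then every $\A_{i}$ is a finite Boolean algebra, so $\mid\A_{i}\mid=2^{k_{i}}$ for some $k_{i}\in\mathbb{N}$, and $n=\sum_{i\in I}\mid\A_{i}\mid=\sum_{i\in I}2^{k_{i}}$. Collecting equal exponents, the multiset $\{\mid\A_{i}\mid\}_{i\in I}$ is precisely a binary partition of $n$ in the sense of \eqref{eq:2dec}. Since the members of $L(n)$ are exactly the binary partitions of $n$, written as sequences with strictly decreasing exponents (Definition~\ref{def:seqpart}), the multiset of component cardinalities of a shape corresponds to one and only one sequence $l(n)\in L(n)$. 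This yields a well-defined assignment sending each shape of order $n$ to the unique $l_{i}(n)$ whose members are exactly the cardinalities occurring in that shape.

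Next I would argue that this assignment partitions the shapes of order $n$ into classes $C_{1},\dots,C_{p}$, where $C_{i}$ consists of the shapes whose components have cardinalities corresponding to the members of $P(l_{i}(n))$. The classes are pairwise disjoint because the multiset of component cardinalities is uniquely determined by a shape, while distinct sequences in $L(n)$ carry distinct multisets (the strict monotonicity of the exponents guarantees that each binary partition is listed exactly once). They are exhaustive by the previous paragraph, since every shape of order $n$ lands in some $C_{i}$. Finally, applying Theorem~\ref{teorema principale} to $l_{i}(n)$ identifies $C_{i}$ with exactly the $c(l_{i}(n))$ shapes it describes: the last part of that theorem's proof shows that every shape whose components have cardinalities equal to the members of $P(l_{i}(n))$ is among these $c(l_{i}(n))$, and the middle part shows that they are pairwise distinct. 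Hence $\mid C_{i}\mid=c(l_{i}(n))$, and summing over the disjoint, exhaustive classes gives $\sum_{i=1}^{p}c(l_{i}(n))$ as the total number of shapes.

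The only delicate point, and the step I would treat most carefully, is the claim that each class matches the count of Theorem~\ref{teorema principale} on the nose: one must be certain that ``cardinalities corresponding to the members of $P(l_{i}(n))$'' captures \emph{all} shapes in $C_{i}$ and nothing more. This is already guaranteed by the completeness half of the proof of Theorem~\ref{teorema principale}, so no further work on isomorphism invariance is needed here. Once disjointness and exhaustiveness of the partition are in place, the corollary follows by additivity over a finite partition.
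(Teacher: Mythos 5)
Your proof is correct and takes essentially the same route as the paper: the corollary is stated there without proof as an immediate consequence of Theorem~\ref{teorema principale}, and your argument merely makes explicit the tacit step, namely that the multiset of component cardinalities assigns each shape of order $n$ to exactly one sequence $l_{i}(n)\in L(n)$ (strictly decreasing exponents guarantee uniqueness), so the classes are pairwise disjoint and exhaustive and the counts $c(l_{i}(n))$ simply add. The careful point you flag --- that Theorem~\ref{teorema principale} counts \emph{all} shapes with the given cardinalities and no others --- is indeed exactly what the last paragraph of that theorem's proof establishes, so your appeal to it is sound.
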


\begin{algorithm}%[H]
\caption{}\label{alg:genandcount}
\begin{algorithmic}[1]
\Function{L-IBSL}{$n$}
	\State $F=$\Call{GenForest}{$n$} \label{lin:tree}
	\ForEach{tree $T$ in $F$} 
		\ForEach{branch $B$ in $T$}
			\Comment{Notice that $B$ is a list of couples $(i,2^e)$}
			\State $B'=B$ without all the couples $(i,2^0)$
			\ForEach{permutation $P$ of $B'$}\label{lin:perm}
				%\Comment{permutation sufficiente? dettagliare?}
				\ForEach{ $(i,2^{e_i})\to(j,2^{e_j})$ in $P$}
					\State $t=t\times$\Call{N}{$2^{e_i}$, $2^{e_j}$}\label{lin:napp}
				\EndFor
			\EndFor
		\EndFor
	\EndFor
	\State \textbf{return} $t$
\EndFunction
\end{algorithmic}
\end{algorithm}

\begin{theorem}
The number of all the non-isomorphic \LIBSL of cardinality $n$ is given by \textsc{L-IBSL}$(n)$.
\end{theorem}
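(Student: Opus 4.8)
The plan is to show that \textsc{L-IBSL}$(n)$ accumulates, over all shapes of \LIBSL of order $n$, the number of pairwise non-isomorphic \LIBSL realizing each shape. By the discussion following Definition~\ref{def shape} (a consequence of Lemma~\ref{lem:iso1}) two \LIBSL with different shapes are never isomorphic, and every finite \LIBSL has a well-defined shape in the sense of Definition~\ref{def shape}; hence the isomorphism classes of \LIBSL of order $n$ are partitioned according to shape. Writing $N(\mathbb{X})$ for the number of pairwise non-isomorphic \LIBSL of shape $\mathbb{X}$, the target identity is that the number of isomorphism classes of \LIBSL of order $n$ equals
\[
\sum_{l(n)\in L(n)}\ \sum_{s\in\pi(l(n))} N\bigl(P(s)\bigr),
\]
where $P(s)$ is identified with the shape it presents (Remark~\ref{L-ibsl is  sequence}), and I would prove that the nested loops of Algorithm~\ref{alg:genandcount} compute exactly this double sum.

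First I would fix the outer enumeration. By Theorem~\ref{th: algoritmo completo}, the call at Line~\ref{lin:tree}, \textsc{GenForest}$(n)$, returns $\Gamma(L(n))$, whose branches are precisely the sequences $l(n)\in L(n)$; thus ranging over all trees $T\in F$ and all branches $B$ exhausts $L(n)$, i.e. all binary partitions of $n$, hence all admissible multisets of Boolean components. For a fixed branch $B=l(n)$, deleting the couples $(i,2^0)$ yields the positive part, which by Remark~\ref{L-ibsl is  sequence} carries the same data as $P^{+}(l(n))$; reading the loop at Line~\ref{lin:perm} as ranging over $\pi(l(n))$ (permutations with repetition of $P^{+}(l(n))$), Theorem~\ref{teorema principale} identifies these with the $c(l(n))$ shapes of \LIBSL of order $n$ arising from $l(n)$. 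By the Corollary following Theorem~\ref{teorema principale}, and since distinct $l(n)$ furnish disjoint multisets of Boolean orders, summing over branches and permutations visits every shape of order $n$ exactly once.

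Next I would identify the innermost product with the per-shape count. For a fixed $s$, Remark~\ref{rem:isopart2} gives $N(P(s))=\prod_i N(\mathbb{A}_{p_{i\,i+1}})$, the product running over the consecutive pairs of the chain $P(s)$, and Lemma~\ref{lem: conteggio hom con isofilter} evaluates each factor as $N(2^{e_i},2^{e_j})=\min(|\widehat{A}_{e_i}|,|\widehat{A}_{e_j}|)=\min(e_i,e_j)$; this is exactly the quantity \textsc{N}$(2^{e_i},2^{e_j})$ multiplied in at Line~\ref{lin:napp}. The point that requires genuine care, and which I expect to be the main obstacle, is the treatment of the trivial (order-$2^0$) components, since Lemma~\ref{lem:iso1} and Lemma~\ref{lem: conteggio hom con isofilter} are stated only for systems without trivial algebras, and the raw formula would force a spurious factor $\min(e,0)=0$ at any adjacency meeting a trivial component. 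I would remove this obstruction as follows: there is no Boolean homomorphism from a trivial algebra into a non-trivial one, so a trivial component can sit only at the top of the chain; moreover all trivial components are mutually isomorphic and the transition morphism into a trivial algebra is unique. Consequently (a) permuting the trivial couples produces no new shape, and (b) every adjacency involving a trivial component contributes the factor $1$ to $N(\mathbb{X})$. These are precisely the two effects of deleting the $(i,2^0)$ couples before the permutation loop: stripping them alters neither the set of shapes enumerated nor the value of the inner product.

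Finally I would assemble the pieces. Reading the accumulator with its intended semantics---initialised to $1$ at the start of each permutation, with the resulting product added to a running total---the body of the nested loops computes $N(P(s))$ for each $s\in\pi(l(n))$ and sums these over all $s$ and all branches, returning $\sum_{l(n)\in L(n)}\sum_{s\in\pi(l(n))}N(P(s))$. By the first paragraph this is the number of isomorphism classes of \LIBSL of order $n$, which proves the theorem. The substance of the argument is the trivial-component analysis above; the remainder is bookkeeping that matches the loop ranges of Algorithm~\ref{alg:genandcount} to the combinatorial identities of Theorems~\ref{th: algoritmo completo} and~\ref{teorema principale}, Lemma~\ref{lem: conteggio hom con isofilter}, and Remark~\ref{rem:isopart2}.
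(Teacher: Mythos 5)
Your proof is correct and takes essentially the same route as the paper's: Theorem~\ref{th: algoritmo completo} identifies the branches computed at Line~\ref{lin:tree} with the sequences in $L(n)$, Theorem~\ref{teorema principale} identifies the permutation loop at Line~\ref{lin:perm} with the $c(l(n))$ shapes, and Lemma~\ref{lem: conteggio hom con isofilter} together with Remark~\ref{rem:isopart2} evaluates the product accumulated at Line~\ref{lin:napp} as the per-shape count of non-isomorphic \LIBSL. Your explicit justification for deleting the $(i,2^0)$ couples --- trivial components can only occupy the top of the chain, the transition morphism into a trivial algebra is unique, so they neither create new shapes nor contribute nontrivial factors --- makes rigorous a step the paper's proof leaves implicit, but it does not alter the structure of the argument.
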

\begin{proof}
Let $B$ be one of the branches of a tree in $F$, as computed in Line~\ref{lin:tree}.
By Theorem~\ref{th: algoritmo completo}, $B$ is exactly a sequence $l(n)\in L(n)$.
Each one of the $c(l(n))$ permutations (see Theorem~\ref{teorema principale}) of $l(n)$ are computed 
in Line~\ref{lin:perm}. Consider $s\in \pi (l(n))$. 
In the light of Lemma \ref{lem: conteggio hom con isofilter}, for every pair $2^{e_{i}},2^{e_{j}}$ such that $2^{e_{i}}\to2^{e_{j}}\in s$ the number
$\prod_{i,j}N(2^{e_{i}},2^{e_{j}})$
gives us all the non-isomorphic \LIBSL of the shape described by  $s$, as computed in Line~\ref{lin:napp}.
\qed
\end{proof}

\section{Conclusions}\label{sec: Conclusioni}

The following results have been obtained on a GNU/Linux Debian 4.9.82-1 system with an Intel Core i7-5500U CPU and 8GB of RAM
\footnote{
The Python implementation, the Mace4 input and output files can be downloaded from:
\url{https://homes.di.unimi.it/~valota/code/libsl.zip}}.

To study the effectiveness of our algorithm,
we have used Mace4 \cite{prover9-mace4} to compute the number of finite linearly ordered IBSLs, 
relying on the first-order theory provided in Section~\ref{sez: classe l-ordered}.
We point out that using this FO theory, 
in the LIBSL given by the P\l onka sum of $n$ trivial Boolean algebras we have $0=1$. 
Mace4 assumes $0\not=1$, and hence such type of LIBSLs are not generated by it. 
To obtain P\l onka sums of trivial Boolean algebras, we have to replace $1$ for another constants
in the FO theory.

Mace4 produces in reasonable time the algebraic structures of cardinality up to $11$.
%giving as output a file of $182.6$ Megabyte.
For cardinality $12$, Mace4 exits reaching its internal time limit.
After running the \texttt{isofilter} program associated to Mace4, we obtain a file containing
the non-isomorphic LIBLs of cardinality $2\leq n\leq 11$ generated by Mace4. 

The procedure introduced in the previous section has been implemented in Python and
has been used to compute the number of all the non-isomorphic LIBSLs of cardinality $1\leq n\leq 23$,
the results are reported in Table~\ref{tab:fine}. 
%The produced output is smaller than 300 Kilobytes.
%The produced output does not reach $300$ Kilobytes.
For cardinality $n=24$, the script uses too much RAM and was automatically killed by the system.
%t\added[id=Stef]{Michele dice a lui arriva fino a 25....}
To measure the running times of both experiments we have used the Debian GNU/Linux 
command-line tool \texttt{time}, the results are summarized in the following table.
\begin{table}[h]
\begin{center}						%20
\begin{tabular}{|c|c|c|c|c|}
Running Times	& Algorithm~\ref{alg:genandcount} 	& Mace4 	& \texttt{interpformat}	&\texttt{isofilter}\\	
\hline				
\texttt{real} 	&0m1.331s 				&1m1.115s	&0m40.891s			&0m43.611s\\
\hline
\texttt{user} 	&0m1.176s 				&1m0.008s	&0m40.484s			&0m43.524s	\\
\hline
\texttt{sys}	&0m0.156s 				&0m0.908s	&0m0.232s			&0m0.068s \\
\hline
\end{tabular}
\end{center}
\caption{The running times of our experiments as calculated by the tool \texttt{time}.
The second column reports the total time used by the Python implementation of our algorithm 
to count all the non-isomorphic LIBSLs of cardinality $1\leq n\leq 23$.
The third column reports the time used by Mace4 to generate the first-order models of cardinality $2\leq n\leq 11$.
The fourth column reports the time used by \texttt{interpformat} to transform the Mace4 models in a format useful for \texttt{isofilter}.
The fifth column reports the time used by \texttt{isofilter} to produce a file with all the non-isomorphic LIBSLs of cardinality $2\leq n\leq 11$.
}
\end{table}

Comparing these running times, it is clear that also a non-optimized implementation of our algorithm is more efficient 
than the brute-force approach of Mace4, for counting purposes. 
We should point out, however,
that Mace4 generates full models with tables for each operation,
whereas our algorithm only produces shapes of LIBSLs and then it performs counting computations.
As reported above, our Python script is memory consuming.
Hence, it would be interesting to improve our implementation with a better memory management,
and with additional options to generate also the algebraic structure of the counted LIBSLs.
%Moreover, a computational complexity study of the algorithm seems to be affordable,
%and it will allow us to establish upper and lower bounds on the number of LIBSLs of cardinality $n$.
%These research directions are outside the scope of this paper, and are left as future works.
Moreover, computational complexity study and asymptotic analysis of our algorithm seems to be within reach,
allowing us to establish upper and lower bounds on the number of LIBSLs of cardinality n. 
These research directions are outside the scope of this paper, and are
left as future work.

\begin{table}[h]
\begin{center}						%20
\begin{tabular}{|c|c|c|c|c|c|c|c|c|c|c|c|c|c|c|c|c|c|c|c|c|c|c|c|}
\hline								%		
$n$			&1& 2&3&4&5& 6 & 7 & 8 & 9 & 10 & 11 & 12 & 13 & 14 & 15 & 16 & 17 & 18 & 19 & 20&21 & 22 & 23  \\
\hline
\textsc{L-IBSL}($n$) 	&1&2&2&4&4 & 7& 7 & 14 & 14 & 26& 26 & 52 & 52 & 99 & 99 & 199 & 199 & 386 & 386 & 772 & 772& 1508& 1508 \\
\hline
\end{tabular}
\end{center}
\caption{The numbers of L-IBSL with $n$ elements for $1\leq n\leq 23$.}\label{tab:fine}
\end{table}
The set of cardinalities of the finite members of a variety of algebras $\mathcal{V}$,
goes by the name of \emph{fine spectrum} of $\mathcal{V}$ and it has been introduced by Taylor in \cite{T75}.
According to Quackenbush, when dealing with ordered structures,
``the fine spectrum problem is usually hopeless''  \cite{Q82}.
In this note we have introduced a procedure to count a specific subclass, namely linearly ordered IBSL: this represents a first step to solve the fine spectrum problem for the variety $\mathcal{IBSL}$. As our approach relies on the algebraic representation theorem, it can possibly be extended to different subclasses of $\mathcal{IBSL}$. An option is considering, for instance, the quasi-variety of $\mathcal{IBSL}$ whose maps, in the P\l onka representation, are injective (this class appears to be useful in the study of probability measures \cite{BonzioFlaminio}).
Since the index set of the P\l onka sum representation of a finite IBSL
is a finite semilattice, and finite semilattices coincides with finite lattices,
our next step is to improve our approach with the algorithm to generates finite lattices
established in \cite{HJ02}.
Finally, we notice that our approach is heavily grounded on the duality of Theorem~\ref{th:finibsldual}.
Hence, it appears that spectra problems are easier to handle when restated in dual terms.
Indeed, in literature one can find several duality-based solutions to the \emph{free spectrum problem} 
(counting the number of $k$-generated free algebras) for varieties related to many-valued logics\footnote{For a complete account on Lindenbaum algebras representations in many-valued logic setting, we refer the
interested reader to \cite{AgBoGe11}} \cite{ABM07,BV11},
and very recently this dual approach has been used to compute the fine spectrum of the variety
of prelinear Heyting algebras \cite{V18}.

\paragraph*{Acknowledgments:} the authors wish to thank the anonymous referees for their helpful comments.

%\bibliographystyle{abbrv}
%\bibliography{main}

\begin{thebibliography}{10}

\bibitem{AgBoGe11}
S.~Aguzzoli, S.~Bova, and B.~Gerla.
\newblock Free algebras and functional representation for fuzzy logics.
\newblock In P.~Cintula, P.~H{\'a}jek, and C.~Noguera, editors, {\em Handbook
  of Mathematical Fuzzy Logic. Volume 2}, volume~38 of {\em Studies in Logic.
  Mathematical Logic and Foundation}, pages 713--791. College Publications,
  2011.

\bibitem{ABM07}
S.~Aguzzoli, M.~Busaniche, and V.~Marra.
\newblock {Spectral Duality for Finitely Generated Nilpotent Minimum Algebras,
  with Applications}.
\newblock {\em Journal of Logic and Computation}, 17(4):749--765, 2007.

\bibitem{Bal70}
R.~Balbes.
\newblock A representation theorem for distributive quasilattices.
\newblock {\em Fundamenta Mathematicae}, 68:207--214, 1970.

\bibitem{Bergman2015}
C.~Bergman and D.~Failing.
\newblock Commutative idempotent groupoids and the constraint satisfaction
  problem.
\newblock {\em Algebra Universalis}, 73(3):391--417, 2015.

\bibitem{Bochvar}
D.~Bochvar.
\newblock On a three-valued calculus and its application in the analysis of the
  paradoxes of the extended functional calculus.
\newblock {\em Mathematicheskii Sbornik}, 4:287--308, 1938.

\bibitem{SB18}
S.~Bonzio.
\newblock Dualities for {P}\l onka sums.
\newblock {\em Logica Universalis}, to appear.

\bibitem{BonzioFlaminio}
S.~Bonzio, T.~Flaminio, and A.~Loi.
\newblock {States over {P}\l onka sums of {B}oolean algebras}.
\newblock In preparation, 2018.

\bibitem{Bonzio16}
S.~Bonzio, J.~Gil-F\'erez, F.~Paoli, and L.~Peruzzi.
\newblock On {P}araconsistent {W}eak {K}leene {L}ogic: axiomatization and
  algebraic analysis.
\newblock {\em Studia Logica}, 105(2):253--297, 2017.

\bibitem{Loi}
S.~Bonzio, A.~Loi, and L.~Peruzzi.
\newblock A duality for involutive bisemilattices.
\newblock {\em Studia Logica}, 2018.
\newblock https://doi.org/10.1007/s11225-018-9801-0.

\bibitem{BonzioMorascoPrabaldi}
S.~Bonzio, T.~Moraschini, and M.~Pra~Baldi.
\newblock Logics of left variable inclusion and {P}\l onka sums of matrices.
\newblock Submitted manuscript, 2018.

\bibitem{BonzioPraBaldi}
S.~Bonzio and M.~Pra~Baldi.
\newblock Containment logics and {P}\l onka sums of matrices.
\newblock Submitted manuscript, 2018.

\bibitem{BV11}
S.~Bova and D.~Valota.
\newblock {Finitely Generated RDP-Algebras: Spectral Duality, Finite Coproducts
  and Logical Properties}.
\newblock {\em Journal of Logic and Computation}, 2011.

\bibitem{CiuniCarrara}
R.~Ciuni and M.~Carrara.
\newblock Characterizing logical consequence in paraconsistent weak kleene.
\newblock In L.~Felline, A.~Ledda, F.~Paoli, and E.~Rossanese, editors, {\em
  New Developments in Logic and the Philosophy of Science}. College, London,
  2016.

\bibitem{Ferguson}
T.~Ferguson.
\newblock A computational interpretation of conceptivism.
\newblock {\em Journal of Applied Non-Classical Logics}, 24(4):333--367, 2014.

\bibitem{GKP89}
R.~Graham, D.~Knuth, and O.~Patashnik.
\newblock {\em {Concrete Mathematics}}.
\newblock Addison-Wesley, 1989.

\bibitem{Hallden}
S.~Halld\'en.
\newblock {\em The Logic of Nonsense}.
\newblock Lundequista Bokhandeln, Uppsala, 1949.

\bibitem{Harding2016}
J.~Harding and A.~B. Romanowska.
\newblock {Varieties of Birkhoff systems: part {I}}.
\newblock {\em Order}, 34(1):45--68, 2017.

\bibitem{Harding20172}
J.~Harding and A.~B. Romanowska.
\newblock {Varieties of Birkhoff Systems: Part {II}}.
\newblock {\em Order}, 34(1):69--89, 2017.

\bibitem{HJ02}
J.~Heitzig and J.~Reinhold.
\newblock Counting finite lattices.
\newblock {\em Algebra Universalis}, 48(1):43--53, Aug 2002.

\bibitem{J82}
P.~T. Johnstone.
\newblock {\em Stone Spaces}, volume~3 of {\em Cambridge Studies in Advanced
  Mathematics}.
\newblock Cambridge University Press, Cambridge, 1982.

\bibitem{Kal71}
J.~Kalman.
\newblock Subdirect decomposition of distributive quasilattices.
\newblock {\em Fundamenta Mathematicae}, 2(71):161--163, 1971.

\bibitem{Kleene}
S.~Kleene.
\newblock {\em Introduction to Metamathematics}.
\newblock North Holland, Amsterdam, 1952.

\bibitem{Libkin}
L.~Libkin.
\newblock {\em Aspects of Partial Information in Databases}.
\newblock PhD Thesis, University of Pennsylvania, 1994.

\bibitem{ML98}
S.~MacLane.
\newblock {\em {Categories for the Working Mathematician}}.
\newblock Springer-Verlag, second edition, 1998.

\bibitem{prover9-mace4}
W.~McCune.
\newblock Prover9 and mace4.
\newblock \verb|http://www.cs.unm.edu/~mccune/prover9/|, 2005--2010.

\bibitem{Plo67}
J.~P{\l}onka.
\newblock On a method of construction of abstract algebras.
\newblock {\em Fundamenta Mathematicae}, 61(2):183--189, 1967.

\bibitem{plonka1984sum}
J.~P{\l}onka.
\newblock On the sum of a direct system of universal algebras with nullary
  polynomials.
\newblock {\em Algebra {U}niversalis}, 19(2):197--207, 1984.

\bibitem{Romanowska92}
J.~P{\l}onka and A.~Romanowska.
\newblock Semilattice sums.
\newblock {\em Universal Algebra and Quasigroup Theory}, pages 123--158, 1992.

\bibitem{Puhlmann}
H.~Puhlmann.
\newblock The snack powerdomain for database semantics.
\newblock In A.~M. Borzyszkowski and S.~Soko{\l}owski, editors, {\em
  Mathematical Foundations of Computer Science 1993}, pages 650--659, Berlin,
  Heidelberg, 1993. Springer Berlin Heidelberg.

\bibitem{Q82}
R.~Quackenbush.
\newblock Enumeration in classes of ordered structures.
\newblock In I.~Rival, editor, {\em Ordered Sets: Proceedings of the NATO
  Advanced Study Institute held at Banff, Canada, August 28 to September 12,
  1981}, pages 523--554. Springer Netherlands, Dordrecht, 1982.

\bibitem{Romanowska96}
A.~Romanowska and J.~Smith.
\newblock Semilattice-based dualities.
\newblock {\em Studia Logica}, 56(1/2):225--261, 1996.

\bibitem{Romanowska97}
A.~Romanowska and J.~Smith.
\newblock Duality for semilattice representations.
\newblock {\em Journal of Pure and Applied Algebra}, 115(3):289--308, 1997.

\bibitem{Stone37}
M.~Stone.
\newblock Applications of the theory of boolean rings to general topology.
\newblock {\em Transactions of the American Mathematical Society}, 41:375--481,
  1937.

\bibitem{T75}
W.~Taylor.
\newblock The fine spectrum of a variety.
\newblock {\em Algebra Universalis}, 5(1):263--303, 1975.

\bibitem{V18}
D.~Valota.
\newblock {Spectra of G\"{o}del Algebras}.
\newblock Submitted manuscript, 2018.

\end{thebibliography}

\end{document}